\newtheorem{theorem}{Theorem}[section]
\newtheorem{proposition}[theorem]{Proposition}
\theoremstyle{definition}
\newtheorem{lemma}[theorem]{Lemma}
\newtheorem{definition}[theorem]{Definition}
\newtheorem{problem}[theorem]{Problem}
\newtheorem{remark}[theorem]{Remark}
\begin{document}
\title[Games orbits play and obstructions to Borel reducibility]{Games
orbits play and obstructions to Borel reducibility}
\author{Martino Lupini}
\address{Mathematics Department, Caltech, 1200 E. California Blvd, MC 253-37
Pasadena, CA 91125, USA}
\email{lupini@caltech.edu}
\urladdr{http://www.lupini.org/}
\author{Aristotelis Panagiotopoulos}
\address{Department of Mathematics, 1409 W. Green St., University of
Illinois, Urbana, IL 61801}
\curraddr{Mathematics Department, Caltech, 1200 E. California Blvd, MC 253-37
Pasadena, CA 91125, USA}
\email{panagio@caltech.edu}
\urladdr{http://www.its.caltech.edu/~panagio/}
\dedicatory{}
\subjclass[2000]{Primary 03E15, 54H20; Secondary 20L05, 54H05}
\thanks{This work was initiated during a visit of A.P.\ at the California
Institute of Technology in the Spring 2016. The authors gratefully
acknowledge the hospitality and the financial support of the Institute.
M.L.\ was partially supported by the NSF Grant DMS-1600186.}
\keywords{Polish group, Polish groupoid, turbulence, orbit equivalence
relation, Borel game, CLI group, non-Archimedean group}

\begin{abstract}
We introduce a new, game-theoretic approach to anti-classification results
for orbit equivalence relations. Within this framework, we give a short
conceptual proof of Hjorth's turbulence theorem. We also introduce a new
dynamical criterion providing an obstruction to classification by orbits of
CLI groups. We apply this criterion to the relation of equality of countable
sets of reals, and the relations of unitary conjugacy of unitary and
selfadjoint operators on the separable infinite-dimensional Hilbert space.
\end{abstract}

\maketitle

\section{Introduction}

Borel complexity theory provides a framework for studying the relative
complexity of classification problems in mathematics. In this context, a
classification problem is regarded as an equivalence relation on a standard
Borel space and \emph{\ Borel reductions} provide the main notion of comparison between equivalence relations. Two distinguished classes of equivalence relations that are often used as
benchmarks to measure the complexity of other equivalence relations are
equality on a Polish space and isomorphism of countable structures. An
equivalence relation $E$ is called \emph{concretely classifiable} if $E$ is
Borel reducible to equality on some Polish space $Y$. More generously, an
equivalence relation is called \emph{classifiable by countable structures }%
if it is Borel reducible to the relation $\cong _{\mathcal{L}}$ of
isomorphism within the class of countably infinite structures in some first
order language $\mathcal{L}$.

Many naturally arising equivalence relations are given as orbit equivalence
relations $E_{G}^{X}$ associated with a continuous action of a Polish group $%
G $ on a Polish space $X$. For instance, the relation $\cong _{\mathcal{L}}$
is induced by the canonical logic action of the group $S_{\infty }$ of
permutations of $\omega $ on the space $\mathrm{Mod}(\mathcal{L})$ of $%
\mathcal{L}$-structures with universe $\omega $. Similarly, the equality
relation on a Polish space is induced by the action of the trivial group.

In the process of determining the exact complexity of an equivalence relation  one often has to establish both positive and negative results. For the later, it is important to isolate criteria that imply nonclassifiability by certain invariants. The well-known
criterion of \emph{generic ergodicity }(having meager orbits and a dense
orbit) provides a dynamical condition on a continuous action of a Polish
group on a Polish space ensuring that the corresponding orbit equivalence
relation is not concretely classifiable. Hjorth's notion of \emph{turbulent
action} is a strengthening of generic ergodicity, ensuring that the
associated orbit equivalence relation is not classifiable by countable
structures. Both these results can actually be seen as addressing the following
general problem.

\begin{problem}
\label{problem} Given a class of Polish groups $\mathcal{C}$, which
dynamical conditions on a Polish $G$-space $X$ ensure that the corresponding
orbit equivalence relation is not Borel reducible to $E_{H}^{Y}$ for some
Borel action of a Polish group $H$ in $\mathcal{C}$ on a Polish space $Y$?
\end{problem}

Indeed, generic ergodicity provides such a criterion for the class $\mathcal{%
C}$ of \emph{compact Polish groups }\cite[Proposition 6.1.10]%
{gao_invariant_2009}. Hjorth's turbulence theorem \cite%
{hjorth_classification_2000} addresses this problem in the case when $%
\mathcal{C}$ is the class of \emph{non-Archimedean Polish groups}.
Turbulence has played a key role in Borel complexity theory in the last two
decades and it is to this day essentially the only known method to prove
unclassifiability by countable structures; see \cite%
{hjorth_non-smooth_1997,farah_turbulence_2014,foreman_anti-classification_2004,lupini_unitary_2014,kerr_borel_2015,kerr_turbulence_2010,ando_borel_2015, ando_weylvon_2015,sasyk_borel_2009,sasyk_classification_2009,sasyk_turbulence_2010,hartz_classification_2015,tornquist_set_2012,ioana_subequivalence_2009,kechris_complexity_2012,kechris_global_2010,gao_invariant_2009,hjorth_classification_2000,kechris_actions_2002,farah_dichotomy_2012}%
. There has been so far little progress into obtaining similar criteria for
other interesting classes of Polish groups.

The purpose of this paper is two-fold. Our first goal is to introduce a
game-theoretic approach to Problem \ref{problem}. This approach consists in
endowing the space $X/G$ of orbits of a Polish $G$-space $X$ with different
graph structures, and then showing that a Baire measurable $%
(E_{G}^{X},E_{H}^{Y})$-homomorphism $f\colon X\rightarrow Y$ induces a graph
homomorphism $X/G\rightarrow Y/G$ after restricting to an invariant dense $%
G_{\delta }$ set. This perspective allows us to give a short conceptual
proof of Hjorth's turbulence theorem, avoiding the substantial amount of
bookkeeping of Hjorth's original argument \cite{hjorth_classification_2000};
see also \cite[Chapter 10]{gao_invariant_2009}.

The second goal of this paper is to use the above-mentioned game-theoretic
approach to address Problem \ref{problem} for the class of \emph{CLI groups}%
. Recall that a CLI group is a Polish group that admits a \emph{compatible
complete left-invariant metric}. Every locally compact group, as well as
every solvable Polish group---in particular, every abelian Polish group---is
CLI \cite[Corollary 3.7]{hjorth_vaughts_1999}. This class of groups has been
considered in several papers so far. For instance, \cite[Corollary 5.C.6]%
{becker_polish_1998} settled the topological Vaught conjecture for CLI
groups. It is also proved in \cite[Theorem 5.B.2]{becker_polish_1998} that
CLI groups satisfy an analog of the Glimm-Effros dichotomy. In \cite[Theorem
1.1]{gao_automorphism_1998} it is shown that the non-Archimedean CLI groups
are precisely the automorphism groups of countable structures whose Scott
sentence does not have an uncountable model.\ The class of CLI groups has
been further studied in \cite{malicki_polish_2011}, where it is shown that
it forms a coanalytic non-Borel subset of the class of Polish groups.

A fundamental tool in the study of dichotomies for orbit equivalence
relations from \cite{becker_polish_1998} is the notion of (left) $\iota $%
-embeddability for points in a Polish $G$-space. We work here with the right
variant of $\iota $-embeddability which we call \emph{Becker embeddability}.
We prove that a Baire-measurable homomorphism between orbit equivalence
relations necessarily preserves Becker embeddability on an invariant dense $%
G_{\delta }$ set. From this we extract in Theorem \ref{Theorem:criterion} a
dynamical condition which answers Problem \ref{problem} for the class of CLI
groups. We then apply it to show that the Friedman-Stanley jump of equality $%
=^{+}$ is not Borel reducible to the orbit equivalence relation induced by a
Borel action of a CLI group. The only proof of this fact that we are aware
of relies on meta-mathematical reasoning and involves the theory of pinned
equivalence relations; see \cite{kanovei_borel_2008}. A natural reduction
from this relation to the relations of unitary equivalence of bounded
unitary or selfajdoint operators on an infinite-dimensional Hilbert space,
shows that the latter relations are also not classifiable by the orbits of a
CLI group actions. We note that it is still an open question if an action of
the unitary group can induce an orbit equivalence relation which is
universal for orbit equivalence relations induced by Polish group actions.
Our results show that the complexity of possible orbit equivalence relations
of $\mathcal{U}(\mathcal{H})$-actions is not bounded from above by the
complexity of orbit equivalence relations induced by continuous CLI group
actions.

We conclude by discussing how all the results of the present paper admit
natural generalizations from Polish group actions to Polish groupoids.
Turbulence theory for Polish groupoids has been developed in \cite%
{hartz_classification_2015}. Applications of this more general framework to
classification problems in operator algebras have also been presented in 
\cite{hartz_classification_2015}.

Besides this introduction, the present paper is divided into three sections.
In Section \ref{Section:CLI} we present the results about
Becker-embeddability and CLI groups. In Section \ref{Section:turbulence} we
present the short and conceptual proof of Hjorth's turbulence theorem
mentioned above. Finally in Section \ref{Section:groupoids} we recall the
fundamental notions about Polish groupoids, and explain how the main results
of this paper can be adapted to this more general setting. In the rest of
the paper, we will use the category quantifier $\forall ^{\ast }x\in U$ for
the statement \textquotedblleft for a comeager set of $x\in U$%
\textquotedblright ; see \cite[Section 3.2]{gao_invariant_2009}.

\subsubsection*{Acknowledgments}

We are grateful to  Samuel Coskey, Alex Kruckman, Alexander Kechris, and S\l awomir Solecki
for many helpful suggestions and remarks. Furthermore, we thank the
anonymous referee for their careful reading of the manuscript and for many
useful comments.

\section{Nonreducibility to CLI group actions\label{Section:CLI}}

\subsection{The Becker-embedding game\label{Sbs:embeddings}}

Recall that a CLI\emph{\ }group is a Polish group that admits a compatible 
\emph{complete left-invariant metric}. It is easy to see that a Polish group
is CLI if and only if it admits a compatible complete right-invariant
metric; see \cite[3.A.2. Proposition]{becker_polish_1998}. Throughout this
section, we will let $G$ be a Polish group, and $X$ be a Polish $G$-space.
The main goal of this section is to provide a dynamical criterion for a
Polish $G$-space ensuring that the corresponding orbit equivalence relation
is not Borel reducible to the orbit equivalence relation induced by a Borel
action of a CLI group. A characterization of CLI groups in terms of tameness
of the corresponding orbit equivalence relations has been obtained in \cite%
{thompson_metamathematical_2006}.

The notion of $\iota $-embeddability for points of $X$ has been introduced
in \cite[Definition 3.D.1]{becker_polish_1998}. Here we will refer to it as 
\emph{left }$\iota $-embeddability, to distinguish it from its natural right
analogue. Recall that a sequence $\left( g_{n}\right) $ in $G$ is Cauchy
with respect to some left-invariant metric on $G$ if and only if it is
Cauchy with respect to every left-invariant metric on $G$ \cite[Proposition
3.B.1]{becker_polish_1998}. If this holds, we say that $\left( g_{n}\right) $
is \emph{left Cauchy}. We define when $\left( g_{n}\right) $ is right Cauchy
in a similar way, by replacing left-invariant metrics with right-invariant
metrics.

\begin{definition}
\label{Definition:iota}Fix $x,y\in X$. We say that $x$ is \emph{left} $\iota 
$-\emph{embeddable} into $y$ if there exists a left Cauchy sequence $\left(
h_{n}\right) _{n\in \omega }$ in $G$ such that $h_{n}x\rightarrow y$. We say that $x$ is 
\emph{right }$\iota $-\emph{embeddable }into $y$ if there exists a right
Cauchy sequence $\left( h_{n}\right) _{n\in \omega }$ in $G$ such that $%
h_{n}y\rightarrow x$.
\end{definition}

In the rest of the paper, we will focus on the notion of right $\iota $%
-embeddability. Similar results can be proved for left $\iota $%
-embeddability. It is easy to see as in the proof of \cite[Proposition 3.D.4]%
{becker_polish_1998} that the relation of right $\iota $-embeddability is a
preorder. Furthermore, if $x$ is right $\iota $-embeddable into $y$, $%
x^{\prime }$ belongs to the $G$-orbit of $x$, and $y^{\prime }$ belongs to
the $G$-orbit of $y$, then $x^{\prime }$ is right $\iota $-embeddable into $%
y^{\prime }$.

We now consider a natural game between two players, and show that it
captures the notion of right $\iota $-embeddability from Definition \ref%
{Definition:iota}. A natural variation of the same game captures the notion
of left $\iota $-embeddability.

\begin{definition}
\label{Becker-game}Suppose that $X$ is a Polish $G$-space, and $x,y\in X$.
We consider the \emph{Becker-embedding game} $\mathrm{Emb}(x,y)$ played
between two players as follows. Set $U_{0}=X$ and $V_{0}=G$.

\begin{enumerate}
\item In the first turn, Player I plays an open neighborhood $U_{1}$ of $x$,
and an open neighborhood $V_{1}$ of the identity of $G$. Player II replies
with an element $g_{0}$ in $V_0$.

\item In the second turn, Player I then plays an open neighborhood $U_{2}$
of $x$, and an open neighborhood $V_{2}$ of the identity of $G$, and Player
II replies with an element $g_{1}$ in $V_{1}$.

\item[(n)] At the $n$-th turn, Player I plays an open neighborhood $U_{n}$
of $x$, and an open neighborhood $V_{n}$ of the identity of $G$, and Player
II responds with an element $g_{n-1}$ in $V_{n-1}$.
\end{enumerate}

The game proceed in this way, producing a sequence $\left( g_{n}\right) $ of
elements of $G$, a sequence $\left( U_{n}\right) $ of open neighborhoods of $%
x$ in $X$, and a sequence $\left( V_{n}\right) $ of open neighborhoods of
the identity in $G$. Player II wins the game if for every $n>0$, $%
g_{n-1}\cdots g_{0}y\in U_{n}$. We say that $x$ is \emph{Becker embeddable}
into $y$---and write $x\preccurlyeq _{\mathcal{B}}y$---if Player II has a
winning strategy for the game $\mathrm{Emb}(x,y)$.
\end{definition}

\begin{lemma}
\label{Lemma:comeager-Becker}If Player II has a winning strategy for the
Becker-embedding game as described in Definition \ref{Becker-game}, then it
also has a winning strategy for the same game with the additional winning
conditions that $g_{n}$ belongs to some given comeager subset $V_{n}^{\ast }$
of $V_{n}$, and $g_{n-1}\cdots g_{0}y$ belongs to some given comeager subset 
$X_{0}$ of $X$, provided that the set of $g\in G$ such that $gy\in X_{0}$ is
comeager.
\end{lemma}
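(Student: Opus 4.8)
The plan is to first absorb the two extra winning conditions into a single one, and then to derive a strengthened winning strategy by running the given one \emph{in the shadows} and perturbing its moves slightly.

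\emph{Reducing to one condition.} In a topological group right translation is a homeomorphism, so a right translate of a comeager set is comeager. Writing $h_{n-1}=g_{n-2}\cdots g_{0}$, the requirement $g_{n-1}\cdots g_{0}y\in X_{0}$ says exactly that $g_{n-1}$ lies in $\{g\in G:gh_{n-1}y\in X_{0}\}$, and this set equals the right translate $\{g\in G:gy\in X_{0}\}\,h_{n-1}^{-1}$, hence is comeager in $G$ by hypothesis, hence comeager in the open set $V_{n-1}$. Intersecting it with $V_{n-1}^{\ast}$, we see that it suffices to prove the following: if Player II wins $\mathrm{Emb}(x,y)$, then for any rule assigning to each partial play of length $n$ a comeager subset $C_{n-1}$ of $V_{n-1}$, Player II has a winning strategy which in addition plays $g_{n-1}\in C_{n-1}$ at every turn.

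\emph{The construction.} Fix a winning strategy $\sigma$ for Player II in $\mathrm{Emb}(x,y)$, together with compatible metrics on $X$ and on $G$. Note first that Player II cannot simply hope to \emph{steer} $\sigma$ into playing in the $C_{n-1}$'s, since the set of possible $\sigma$-responses at a given turn need not meet a prescribed comeager subset of $V_{n-1}$ (it can be as small as a single point); she must perturb. So, while playing the augmented game, she maintains an auxiliary run of $\mathrm{Emb}(x,y)$ in which she lets $\sigma$ move for her, and in which she has Player I play neighborhoods $U_{n}^{s}\subseteq U_{n}$ and $V_{n}^{s}\subseteq V_{n}$ that she shrinks rapidly, say into the $2^{-n}$-balls about $x$ and about the identity. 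Since this shadow run is consistent with the winning strategy $\sigma$, its trajectory $\tilde z_{n}:=\tilde g_{n-1}\cdots \tilde g_{0}y$ satisfies $\tilde z_{n}\in U_{n}^{s}$ for all $n\ge 1$, so $\tilde z_{n}\to x$, and $\tilde g_{n-1}\in V_{n-1}^{s}\subseteq V_{n-1}$. In the actual game, once $(U_{n},V_{n})$ is revealed and $\sigma$ has produced $\tilde g_{n-1}$, Player II plays an element $g_{n-1}$ close to $\tilde g_{n-1}$: if we have arranged that $\tilde g_{n-1}(g_{n-2}\cdots g_{0}y)$ lies in the open set $U_{n}$, then by continuity of $g\mapsto g(g_{n-2}\cdots g_{0}y)$ the set of $g\in V_{n-1}$ with $g(g_{n-2}\cdots g_{0}y)\in U_{n}$ is a nonempty open subset of $V_{n-1}$, so it meets the comeager set $C_{n-1}$, and Player II takes $g_{n-1}$ in this intersection. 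This makes the actual run a win, and the choice $g_{n-1}\in C_{n-1}$ delivers the extra conditions.

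\emph{The main obstacle.} Everything hinges on the proviso in the previous step: keeping the actual trajectory $z_{n}:=g_{n-1}\cdots g_{0}y$ close enough to the shadow trajectory $\tilde z_{n}\in U_{n}^{s}$ that the perturbations, accumulating over all turns, never drive it out of the neighborhoods played by Player I. The tool is continuity of the action near the relevant points: when $\tilde g_{n-1}\in V_{n-1}^{s}$ is close to the identity and $z_{n-1},\tilde z_{n-1}$ are close to $x$, the points $\tilde g_{n-1}z_{n-1}$ and $\tilde g_{n-1}\tilde z_{n-1}=\tilde z_{n}$ are close, and then a sufficiently small perturbation of $\tilde g_{n-1}$ keeps $g_{n-1}z_{n-1}$ close to $\tilde z_{n}\in U_{n}^{s}\subseteq U_{n}$. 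Turning this into an honest inductive construction is the delicate point: the sizes of the shadow neighborhoods $U_{n}^{s},V_{n}^{s}$ and of the admissible perturbation at step $n$ must be committed in the right order, each one small in terms of the data already on the table, so that the errors telescope. This fusion-type bookkeeping is where essentially all of the work goes, and is the analogue here of the change-of-topology manipulations familiar in this circle of ideas.
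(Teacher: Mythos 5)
Your reduction of the two extra requirements to a single comeager constraint on each move (by right-translating $\{g\in G: gy\in X_{0}\}$) is correct, and so is your observation that Player II must perturb the given strategy rather than steer it, since its responses may form a very small set. The overall architecture (run the given strategy in a shadow game on shrunk neighborhoods, perturb its outputs into the comeager sets) is also the right one, and is the paper's. But there is a genuine gap exactly where you locate ``essentially all of the work'': nothing in your sketch controls the discrepancy between the actual trajectory $z_{n}=g_{n-1}\cdots g_{0}y$ and the shadow trajectory $\tilde z_{n}=\tilde g_{n-1}\cdots \tilde g_{0}y$. The shadow strategy only guarantees $\tilde g_{n-1}\tilde z_{n-1}\in U_{n}^{s}$; to have a nonempty open set of legal moves you need some element of $V_{n-1}$ to send the \emph{perturbed} point $z_{n-1}$ into $U_{n}$, and this cannot be arranged by making the earlier perturbations ``small enough'': how small the perturbation at stage $k$ would have to be depends on the later neighborhoods $U_{n}$, which Player I reveals only after stage $k$ and can take smaller than any already-committed discrepancy, and on the local behaviour of the action at the not-yet-determined points $\tilde g_{n-1},\tilde z_{n-1}$. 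There is no modulus of continuity available to commit to in advance, so the ``fusion-type bookkeeping'' you defer to cannot be carried out in the order in which the game forces the choices.

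The paper's proof avoids accumulation of errors altogether by an algebraic device that is the missing idea. In the shadow run Player I plays shrunk neighborhoods $\widetilde V_{n}$ with $\widetilde V_{n}\widetilde V_{n}\subseteq V_{n}$, and the actual move is defined as $\widetilde g_{n}:=\varepsilon_{n}g_{n}'\varepsilon_{n-1}^{-1}$, where $g_{n}'$ is the shadow move and $\varepsilon_{n}$ is a correction chosen at the current turn. These corrections telescope: $\widetilde g_{n}\cdots \widetilde g_{0}=\varepsilon_{n}g_{n}'\cdots g_{0}'$, so the actual trajectory is at every stage a \emph{one-step} perturbation $\varepsilon_{n}\left(g_{n}'\cdots g_{0}'y\right)$ of the shadow trajectory, with $\varepsilon_{n}$ picked only after the current neighborhoods are revealed, from the set of $\varepsilon$ satisfying $\varepsilon^{-1}\in \widetilde V_{n+1}$, $\varepsilon g_{n}'\varepsilon_{n-1}^{-1}\in V_{n}^{\ast}$, and $\varepsilon g_{n}'\cdots g_{0}'y\in X_{0}\cap U_{n+1}$; this set is nonempty because the last condition is open around the identity (the shadow point already lies in $U_{n+1}$), the $X_{0}$-condition and the $V_{n}^{\ast}$-condition are comeager near the identity, and $g_{n}'\varepsilon_{n-1}^{-1}\in \widetilde V_{n}\widetilde V_{n}\subseteq V_{n}$ keeps the actual move legal. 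Without this exact cancellation (or some substitute for it), your construction does not go through as written.
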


\begin{proof}
Suppose that Player II has a winning strategy for the Becker-embedding
game $\mathrm{Emb}\left( x,y\right) $. The strategy consists of one function $%
g_{n}\left( U_{1},\ldots ,U_{n+1},V_{1},\ldots ,V_{n+1}\right) $ for every $n\geq
0 $, where $U_{0},\ldots ,U_{n+1}$ are open neighborhoods of $x$ and $%
V_{1},\ldots ,V_{n}$ are open neighborhoods of the identity in $G$. Since the strategy is winning, one has that $g_{n-1}\cdots g_{0}y\in U_{n}$ and $g_{n}\in V_{n}$
for every $n\geq 0$.

Let us consider now a run of the game $\mathrm{Emb}\left( x,y\right) $.
Suppose that Player I at turn 1 plays open sets $U_{1},V_{1}$. Consider an
open neighborhood $\widetilde{V}_{1}$ of $1$ such that $\widetilde{V}_{1}%
\widetilde{V}_{1}\subseteq V_{1}$. Let $g_{0}^{\prime
}:=g_{0}(U_{1},\widetilde{V}_{1})$ obtained by applying the given winning
strategy of Player II. Using the assumption in the statement of the lemma we can find $%
\varepsilon _{0}\in G$ such that the following conditions are satisfied:

\begin{itemize}
\item $\varepsilon _{0}^{-1}\in \widetilde{V}_{1}$;

\item $\varepsilon _{0}g_{0}^{\prime }\in V_{0}^{\ast }$;

\item $\varepsilon _{0}g_{0}^{\prime }y\in X_{0}\cap U_{1}$.
\end{itemize}

We can then define $\widetilde{g}_{0}:=\varepsilon _{0}g_{0}^{\prime }$.
Observe that $\widetilde{g}_{0}\in V_{0}^{\ast }$ and $\widetilde{g}_{0}y\in
X_{0}\cap U_{1}$.

Suppose inductively that in the first $n$ turns of the game Player I has played open sets $U_{1},V_{1},\ldots
,U_{n},V_{n}$. Consider the elements $g_{i}^{\prime }:=g_{i}(U_{1},\ldots
,U_{i+1},\widetilde{V}_{1},\ldots ,\widetilde{V}_{i+1})$ of $G$ for $i\in
\left\{ 0,1,\ldots ,n-1\right\} $ produced by applying the original strategy
for Player II, where $\widetilde{V}_{i}$ is an open neighborhood of the
identity in $G$ such that $\widetilde{V}_{i}\widetilde{V}_{i}\subseteq V_{i}$
for $i\in \left\{ 1,2,\ldots ,n\right\} $. We suppose furthermore that we
have defined $\varepsilon _{0},\ldots ,\varepsilon _{n-1},\widetilde{g}%
_{0},\ldots ,\widetilde{g}_{n-1}\in G$ are such that the following
conditions are satisfied for every $i\in \left\{ 0,1,\ldots ,n-1\right\} $:

\begin{itemize}
\item $\varepsilon _{i}^{-1}\in \widetilde{V}_{i+1}$;

\item $\varepsilon _{i}g_{i}^{\prime }\in V_{i}^{\ast }$;

\item $\varepsilon _{i}g_{i}^{\prime }g_{i-1}^{\prime }\cdots g_{0}^{\prime
}y\in X_{0}\cap U_{i+1}$;

\item $\widetilde{g}_{i}=\varepsilon _{i}g_{i}\varepsilon _{i-1}^{-1}$ where 
$\varepsilon _{-1}=1$.
\end{itemize}

Suppose that Player I plays open sets $U_{n+1},V_{n+1}$ at turn $n$.
Consider then $g_{n}^{\prime }:=g_{n}(U_{1},\ldots ,U_{n+1},\widetilde{V}%
_{1},\ldots ,\widetilde{V}_{n+1})$ obtained by applying the original winning
strategy of Player II. Thus we have that $g_{n}^{\prime }\cdots
g_{0}^{\prime }y\in X_{0}\cap U_{n+1}$ and that $g_{n}^{\prime }\varepsilon
_{n-1}^{-1}\in \widetilde{V}_{n}\widetilde{V}_{n}\subseteq V_{n+1}$. By
applying the assumption in the statement of the lemma we can find $\varepsilon _{n}\in G$ such that the
following conditions are satisfied:

\begin{itemize}
\item $\varepsilon _{n}^{-1}\in \widetilde{V}_{n+1}$;

\item $\varepsilon _{n}g_{n}^{\prime }\varepsilon _{n-1}^{-1}\in V_{n}^{\ast
}$;

\item $\varepsilon _{n}g_{n}^{\prime }\cdots g_{0}^{\prime }y\in X_{0}\cap
U_{n+1}$.
\end{itemize}

We then set $\widetilde{g}_{n}:=\varepsilon _{n}g_{n}^{\prime }$. Observe
that $\widetilde{g}_{n}\in V_{n}^{\ast }$ and $\widetilde{g}_{n}\widetilde{g}%
_{n-1}\cdots \widetilde{g}_{0}y=\varepsilon _{n}g_{n}\cdots g_{0}y\in
X_{0}\cap U_{n+1}$.

It is clear from the construction that setting $\widetilde{g}%
_{n}:=g_{n}\left( U_{1},\ldots ,U_{n},V_{1},\ldots ,V_{n}\right) $ gives a
new winning strategy for Player II which satisfies the required additional
conditions.
\end{proof}

We now show that the notion of Becker-embeddability from Definition \ref%
{Becker-game} is actually equivalent to the notion of right $\iota $%
-embeddability from Definition \ref{Definition:iota}.

\begin{lemma}
\label{Lemma Becker embed= winning strat for II} Let $X$ be a Polish $G$%
-space. If $x,y$ are points of $X$, then the following statements are
equivalent:

\begin{enumerate}
\item $x\preccurlyeq _{\mathcal{B}}y$;

\item $x$ is right $\iota $-embeddable in $y$.
\end{enumerate}
\end{lemma}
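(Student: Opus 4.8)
The plan is to prove the two implications separately, using the game characterization in one direction and building a winning strategy in the other.

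$(1)\Rightarrow(2)$: Suppose Player II has a winning strategy $\sigma$ for $\mathrm{Emb}(x,y)$. I would let Player I play a decreasing sequence $(U_n)$ of open neighborhoods of $x$ with diameters tending to $0$ (with respect to some fixed compatible metric on $X$), and a decreasing sequence $(V_n)$ of open neighborhoods of the identity of $G$ that is chosen so as to force a right-Cauchy condition on the products produced by Player II. The key point is to interleave this with the bookkeeping needed to control right-invariant distances: if $d_R$ is a fixed compatible right-invariant metric on $G$, I want the partial products $h_n := g_{n-1}\cdots g_0$ to be $d_R$-Cauchy. Since $d_R(h_{n+1},h_n) = d_R(g_n h_n, h_n) = d_R(g_n,1)$ by right-invariance, it suffices to ensure that $g_n \to 1$ fast enough, which Player I can arrange by shrinking $V_n$ appropriately (choosing $V_n$ inside a $d_R$-ball of radius $2^{-n}$ around $1$). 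Then $(h_n)$ is right Cauchy, and since II follows the winning strategy, $h_n y \in U_n$ for all $n$, so $h_n y \to x$ because $\operatorname{diam}(U_n) \to 0$. This exhibits the required right-Cauchy sequence witnessing right $\iota$-embeddability.

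$(2)\Rightarrow(1)$: Suppose there is a right Cauchy sequence $(h_n)$ in $G$ with $h_n y \to x$. I need to produce a winning strategy for Player II. The natural idea is: when Player I has played $U_1,\dots,U_{n+1}$ and $V_1,\dots,V_{n+1}$, Player II should respond with an element $g_n$ such that $g_n$ lies in $V_n$ and the partial product $g_n\cdots g_0 y$ lands in $U_{n+1}$. Writing the target partial product as (a tail of) the sequence $h_k$, I would have Player II maintain the invariant that $g_{n-1}\cdots g_0 = h_{k_n} h_{k_0}^{-1}$ for a suitably chosen increasing sequence of indices $k_0 < k_1 < \cdots$; equivalently $g_n = h_{k_{n+1}} h_{k_n}^{-1}$. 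Because $(h_k)$ is right Cauchy, $h_{k_{n+1}}h_{k_n}^{-1}$ can be made as $d_R$-close to $1$ as desired by taking $k_{n+1}$ large, hence it lies in the prescribed neighborhood $V_n$ of the identity; and because $h_k y \to x$, for $k_{n+1}$ large enough $h_{k_{n+1}} h_{k_0}^{-1} y$ is close to $h_{k_0}^{-1} x \cdot$(correction), so one must be a bit careful and instead track $g_{n-1}\cdots g_0 y = h_{k_n}(h_{k_0}^{-1}y)$: choosing $k_0$ is not free since $g_{n-1}\cdots g_0 y$ must converge to $x$, so I would instead arrange $g_{n-1}\cdots g_0 = h_{k_n}$ directly, i.e.\ $g_0 = h_{k_0}$ and $g_n = h_{k_{n+1}}h_{k_n}^{-1}$, and note that the first move $g_0 \in V_0 = G$ is unconstrained while $g_n \in V_n$ for $n \geq 1$ follows from right-Cauchyness. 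Then $g_n\cdots g_0 y = h_{k_{n+1}} y \to x$, so by choosing $k_{n+1}$ large in response to $U_{n+1}$ we guarantee $g_n\cdots g_0 y \in U_{n+1}$, and Player II wins.

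The main obstacle is getting the indexing and the invariant exactly right in the $(2)\Rightarrow(1)$ direction: the strategy must commit to $g_n$ seeing only $U_1,\dots,U_{n+1}$ and $V_1,\dots,V_{n+1}$, so the choice of the index $k_{n+1}$ must be made online, depending on the radius of $V_n$ (for the Cauchy condition) and on how small a neighborhood $U_{n+1}$ of $x$ Player I has played (for the convergence condition), and one must check that both can be satisfied simultaneously by taking $k_{n+1}$ sufficiently large—which is exactly what right-Cauchyness of $(h_k)$ together with $h_k y \to x$ provides. The other direction is comparatively routine once one notes the right-invariance identity $d_R(g_n h_n, h_n) = d_R(g_n,1)$, which is what makes the right variant of $\iota$-embeddability match the game where Player II multiplies on the left by elements near the identity.
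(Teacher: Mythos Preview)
Your proof is correct and follows the same route as the paper's: for $(1)\Rightarrow(2)$ you have Player~I shrink $U_n,V_n$ and invoke the right-invariance identity $d_R(g_n h_n,h_n)=d_R(g_n,1)$, and for $(2)\Rightarrow(1)$ you telescope $g_0=h_{k_0}$ and $g_n=h_{k_n}h_{k_{n-1}}^{-1}$ (your ``$k_{n+1}$'' should read ``$k_n$'' for the products to collapse to $h_{k_n}$). The one point to tighten is the phrase ``by taking $k_{n+1}$ large'': right-Cauchyness does \emph{not} force $h_{k_{n+1}}h_{k_n}^{-1}\to 1$ for fixed $k_n$, so (as in the paper) when you pick $k_{n-1}$ you should already place it past the Cauchy threshold for the just-revealed $V_n$, ensuring $h_k h_{k_{n-1}}^{-1}\in V_n$ for all $k\ge k_{n-1}$ and hence $g_n\in V_n$ automatically at the next turn.
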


\begin{proof}
We fix a right-invariant metric $d$ on $G$. For a subset $A$ of $G$ we let $%
\mathrm{diam}\left( A\right) $ be the diameter of $A$ with respect to $d$.

(1)$\Rightarrow $(2) Suppose that Player II has a winning strategy for the
Becker-embedding game $\mathrm{Emb}\left( x,y\right) $. Let Player I play a
sequence $\left( U_{n}\right) $ which forms a basis of open neighborhoods of 
$x$ and a sequence $\left( V_{n}\right) $ which forms a basis of symmetric
open neighborhoods of the identity of $G$ with $\mathrm{diam}\left(
V_{n}\right) <2^{-n}$. Let $\left( g_{n}\right) _{n\in \omega }$ be the
sequence of elements of $G$ given by a winning strategy for Player II. Then
the sequence $\left( h_{n}\right) _{n\in \omega }$ obtained by setting $%
h_{n}:=g_{n}\cdots g_{0}$ is $d$-Cauchy, and $h_{n}y\rightarrow x$.

(2)$\Rightarrow $(1) Suppose that there exists a $d$-Cauchy sequence $\left(
h_{n}\right) _{n\in \omega }$ in $G$ such that $h_{n}y\rightarrow x$. We
describe a winning strategy for Player II. Set $h_{-1}:=1$. Suppose that in
the first turn Player I plays an open neighborhood $U_{1}$ of $x$ and an
open neighborhood $V_{1}$ of the identity of $G$. Player II replies with $%
g_{0}:=h_{k_{0}}$, where $k_{0}\in \omega $ is such so that:

\begin{enumerate}
\item $h_{k}h_{k_{0}}^{-1}\in V_{1}$ for all $k\geq k_{0}$, and

\item $h_{k_{0}}y\in U_{1}$.
\end{enumerate}

The first condition is satisfied by a large enough $k_{0}\in \omega $
because $\left( h_{n}\right) _{n\in \omega }$ is $d$-Cauchy. The second
condition is satisfied by a large enough $k_{0}\in \omega $ because $%
h_{n}x\rightarrow y$. Suppose that in the $n$-turn Player I plays an open
neighborhood $U_{n}$ of $x$ and an open neighborhood $V_{n}$ of the identity
in $G$. Inductively, assume also that $g_{n-2}$ is of the form $%
h_{k_{n-2}}h_{k_{n-3}}^{-1}$ for some $k_{n-2}\in \omega $ such that $%
h_{k}h_{k_{n-2}}^{-1}\in V_{n-1}$ for all $k\geq k_{n-2}$. Player II replies
with $g_{n-1}:=h_{k_{n-1}}h_{k_{n-2}}^{-1}$, where $k_{n-1}\in \omega $ is
such that:

\begin{enumerate}
\item $h_{k}h_{k_{n-1}}^{-1}\in V_{n}$ for all $k\geq k_{n-1}$, and

\item $h_{k_{n-1}}y\in U_{n}$.
\end{enumerate}

Again, our assumptions on the sequence $\left( h_{n}\right) _{n\in \omega }$
guarantee that a large enough $k_{n-1}\in \omega $ satisfies both these
conditions. Then we have that $g_{n-1}\in V_{n-1}$ by the inductive
assumption on $k_{n-2}$. Therefore this procedure describes a winning
strategy for Player II in the Becker-embedding game $\mathrm{Emb}\left(
x,y\right) $.
\end{proof}

We let $X/G$ be the space of $G$-orbits of points of $X$. The
Becker-embeddability preorder defines a directed graph structure on $X/G$
obtained by declaring that there is an arrow from the orbit $\left[ x\right] 
$ of $x$ to the orbit $\left[ y\right] $ of $y$ if and only if $%
x\preccurlyeq _{\mathcal{B}}y$. We will call this the \emph{Becker digraph }$%
\mathcal{B}\left( X/G\right) $ of the Polish $G$-space $X$. Similarly, for a 
$G$-invariant subset $X_{0}$ of $X$ we let $\mathcal{B}\left( X_{0}/G\right) 
$ the induced subgraph of $\mathcal{B}\left( X/G\right) $ only containing
vertices corresponding to orbits from $X_{0}$. Suppose that $G,H$ are Polish
groups, $X$ is a Polish $G$-space, and $Y$ is a Polish $H$-space. Any $%
\left( E_{G}^{X},E_{H}^{Y}\right) $-homomorphism $f:X\rightarrow Y$ induces
a function $\left[ f\right] :X/G\rightarrow Y/H$, $\left[ x\right] \mapsto %
\left[ f(x)\right] $. We will show below that, when $f$ is Baire-measurable,
such a function is \emph{generically }a digraph homomorphism with respect to
the Becker digraph structures on $X/G$ and $Y/H$.

We now describe the notion of Becker-embedding in case of Polish $G$-spaces
arising from classes of countable models. Suppose that $\mathcal{L}=\left(
R_{i}\right) _{i\in I}$ is a countable first order relational language,
where $R_{i}$ is a relation symbol with arity $n_{i}$. Let $\mathrm{Mod}%
\left( \mathcal{L}\right) $ be the space of countable $\mathcal{L}$%
-structures having $\mathbb{N}$ as support, $F$ be a countable fragment of $%
\mathcal{L}_{\omega _{1},\omega }$, and $S_{\infty }$ be the group of
permutations of $\mathbb{N}$. As usual, one can regard $\mathrm{Mod}\left( 
\mathcal{L}\right) $ as the product $\prod\nolimits_{i\in I}2^{(\mathbb{N}%
^{n_{i}})}$. Any $\mathcal{L}_{\omega _{1},\omega }$ formula $\varphi \left(
x_{1},\ldots ,x_{n}\right) $ defines a function $\left[ \varphi \right] :%
\mathrm{Mod}\left( \mathcal{L}\right) \rightarrow \left\{ 0,1\right\} ^{%
\mathbb{N}^{n}}$ given by its interpretation. A set $F$ of $\mathcal{L}%
_{\omega _{1},\omega }$ formulas defines a topology $t_{F}$ on\ $\mathrm{Mod}%
\left( \mathcal{L}\right) $, which is the weakest topology that makes the
functions $\left[ \varphi \right] $ for $\varphi \in F$ continuous. The
canonical action $S_{\infty }\curvearrowright \mathrm{Mod}\left( \mathcal{L}%
\right) $ turns $\left( \mathrm{Mod}\left( \mathcal{L}\right) ,t_{F}\right) $
into a Polish $G$-space. If $x,y\in \mathrm{Mod}\left( \mathcal{L}\right) $,
then we have that $x\preccurlyeq _{\mathcal{B}}y$ if and only if there
exists an injective function $f:\mathbb{N}\rightarrow \mathbb{N}$ that
represents an $F$-embedding from $x$ to $y$, in the sense that\ $f$
preserves the value of formulas $\varphi $ in $F$ with parameters. In the
particular case when $F$ is the collection of atomic first-order formulas,
the topology $t_{F}$ coincides with the product topology, and an $F$%
-embedding is the same as an embedding as $\mathcal{L}$-structure. When $F$
is the collection of all first-order formulas, an $F$-embedding is an
elementary embedding. It is shown in \cite[Proposition 2.D.2]%
{becker_polish_1998}, in the case when $F$ is a \emph{fragment }in the sense
define therein, that the same conclusions holds for \emph{left }$\iota $%
-embeddability.

\subsection{The orbit continuity lemma\label{Section:OrbitContinuity}}

Recall that if $E,F$ are equivalence relations on Polish spaces $X,Y$
respectively, then a $\left( E,F\right) $-homomorphism is a function $%
f:X\rightarrow Y$ mapping $E$-classes to $F$-classes. In this subsection we
isolate a lemma to be used in the rest of the paper. It states that a
Baire-measurable homomorphism between orbit equivalence relations admits a
restriction to a dense $G_{\delta }$ set which is continuous at the level of
orbits, in a suitable sense. Variations of such a lemma are well known. The
starting point is essentially \cite[Lemma 3.17]{hjorth_classification_2000}
modified as in the beginning of the proof of \cite[Theorem 3.18]%
{hjorth_classification_2000}; see also \cite[Lemma 10.1.4 and Theorem 10.4.2]%
{gao_invariant_2009}.

\begin{lemma}
\label{Lemma:orbit-continuity}Suppose that $G,H$ are Polish groups, $X$ is a
Polish $G$-space, and $Y$ is a Polish $H$-space. Let $f:X\rightarrow Y$ be a
Baire-measurable $\left( E_{G}^{X},E_{H}^{Y}\right) $-homomorphism. Then
there exists a dense $G_{\delta }$ subset $C$ of $X$ such that

\begin{itemize}
\item the restriction of $f$ to $C$ is continuous;

\item for any $x\in C$, $\left\{ g\in G:gx\in C\right\} $ is a comeager
subset of $G$;

\item for any $x_{0}\in C$ and for any open neighborhood $W$ of the identity
in $H$ there exists an open neighborhood $U$ of $x_{0}$ and an open
neighborhood $V$ of the identity of $G$ such that for any $x\in U\cap C$ and
for a comeager set of $g\in V$, one has that $f(gx)\in Wf(x)$ and $gx\in C$.
\end{itemize}
\end{lemma}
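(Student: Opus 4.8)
The plan is to obtain the set $C$ by intersecting three "good" $G_\delta$ sets, one for each bullet point, using standard facts about Baire-measurable functions and category quantifiers. First I would invoke the classical fact that a Baire-measurable function is continuous on a dense $G_\delta$ set: pick a countable basis $\{W_k\}$ for $Y$, and for each $k$ the set $f^{-1}(W_k)$ has the Baire property, so it agrees with an open set $O_k$ off a meager set $M_k$; then $C_1 := X \setminus \bigcup_k \partial\text{-type corrections}$ — more precisely, one takes $C_1$ to be the comeager $G_\delta$ set on which $f$ restricted is continuous (Kuratowski). For the second bullet, I would use the Kuratowski–Ulam theorem together with the fact that the action map $G \times X \to X$ is continuous: the set $R = \{(g,x) : gx \in C_1\}$ is comeager in $G \times X$ (its complement is the preimage of a meager set under the continuous open action map $(g,x) \mapsto (gx, x)$, hence meager), so by Kuratowski–Ulam the set $C_2$ of $x$ for which $\{g : gx \in C_1\}$ is comeager in $G$ is itself comeager; intersecting and shrinking to a $G_\delta$ set gives a set $C'$ that is invariant-in-the-relevant-sense and on which $f$ is continuous.

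For the third bullet, which is the substantive point, the idea is local continuity of the "twisted" orbit map at the level of $H$-cosets. Fix $x_0 \in C'$ and an open neighborhood $W$ of $1_H$; shrink $W$ to an open $W_0$ with $W_0 W_0^{-1} \subseteq W$ or similar, and choose an open neighborhood $N$ of $f(x_0)$ small enough that $N \subseteq W_0 f(x_0)$, so that $f(x), f(x_0) \in N$ forces $f(x) \in W f(x_0)$ and, by symmetry of the cocycle computation, $f(gx) \in W f(x)$ whenever $f(gx), f(x) \in N$. Since $f$ is continuous on $C'$ at $x_0$, there is an open $U \ni x_0$ with $f(U \cap C') \subseteq N$. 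Now I want: for $x \in U \cap C'$ and comeagerly many $g \in V$, both $gx \in C'$ and $gx \in U$ (so that $f(gx) \in N$, hence $f(gx) \in W f(x)$). The point $gx \in U$ for $g$ near $1_G$ follows from continuity of the action: choose $V$ so small that $V x_0 \subseteq U$, and shrink $U$ again so that $g x \in U$ for all $x \in U$, $g \in V$ (continuity of $G \times X \to X$ at $(1_G, x_0)$, using that $U$ can be taken from a neighborhood basis). Combined with $gx \in C'$ for comeagerly many $g$ (which for $x \in C'$, i.e. $x \in C_2$, holds by definition, and "comeager in $G$" restricts to "comeager in the open set $V$"), this yields the claim; finally shrink $C'$ once more to a dense $G_\delta$ set $C$ on which all three properties persist — here one uses that the third property only needs to be checked at each $x_0 \in C$, and the choices of $U, V$ depend only on $x_0$ and $W$, so no further shrinking of $C$ beyond $C'$ is actually forced.

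The main obstacle is bookkeeping the interplay between the three shrinkings: one must be careful that after passing to the $G_\delta$ set $C'$ witnessing continuity of $f$ and comeagerness of return sets, the third bullet's neighborhoods $U, V$ can still be chosen so that the relevant $g$'s land back in $C'$ comeagerly — this is exactly where $x \in C_2$ (comeager return set for $C_1$, hence for $C'$ after one more Kuratowski–Ulam step) is used, and one should verify that "comeager in $G$" passes to "comeager in the nonempty open subset $V$," which is immediate since a comeager subset of a Polish group meets every nonempty open set in a comeager (in that open set) set. I expect the cocycle/symmetry observation $f(gx) \in W f(x) \iff$ both iterates lie in a small enough neighborhood of $f(x_0)$ to be the one genuinely clever step; everything else is an application of Kuratowski–Ulam and continuity of the group action, following \cite[Lemma 3.17]{hjorth_classification_2000} and \cite[Lemma 10.1.4]{gao_invariant_2009}.
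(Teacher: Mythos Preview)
Your argument for the third bullet has a genuine gap. You write ``choose an open neighborhood $N$ of $f(x_{0})$ small enough that $N \subseteq W_{0} f(x_{0})$,'' but $W_{0} f(x_{0})$ is a subset of the $H$-orbit of $f(x_{0})$ and therefore has empty interior in $Y$ whenever that orbit is not open (which is the generic situation---think of any action with meager orbits, or simply the trivial action). So no such open $N$ exists. The downstream claim that $f(gx),f(x)\in N$ forces $f(gx)\in Wf(x)$ then has no footing: proximity of $f(gx)$ and $f(x)$ in the ambient space $Y$ says nothing about proximity in the $H$-orbit. Notice that your proposed argument for the third bullet never invokes the hypothesis that $f$ is an $(E_{G}^{X},E_{H}^{Y})$-homomorphism; that is the red flag.

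The paper's proof addresses exactly this point by first working \emph{inside a single $G$-orbit}. Fixing $x_{0}$, one knows that every $f(gx_{0})$ lies in the $H$-orbit of $f(x_{0})$, so one can cover $H$ by translates $Wh_{n}$, pull back the sets $\{g: f(gx_{0})\in Wh_{n}f(x_{0})\}$ (which have the Baire property), and find open sets $O_{n}\subseteq G$ on which the index $n$ is locally constant modulo meager; for $g_{0}\in O_{n}$ and generic $g_{1}$ with $g_{1}g_{0}\in O_{n}$ one gets $f(g_{1}g_{0}x_{0})\in WW^{-1}f(g_{0}x_{0})$. A Kuratowski--Ulam step then converts this orbit-by-orbit statement into a statement about a comeager set of points $x$, and a further restriction (to a set where the analytic function $N(x,k)=$ least $n$ with $\forall^{\ast}g\in V_{n}\, f(gx)\in W_{k}f(x)$ is continuous) yields the uniformity in $x\in U\cap C$ required by the lemma. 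Your treatment of the first two bullets is fine and standard; it is the passage from ``$f(gx)$ close to $f(x)$ in $Y$'' to ``$f(gx)\in Wf(x)$'' that cannot be obtained from continuity of $f|_{C'}$ alone.
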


\begin{proof}
Fix a neighborhood $W_{0}$ of the identity in $H$. We first prove the
following claim: $\forall x_{0}\in X$ $\forall ^{\ast }g_{0}\in G$, there is
an open neighborhood $V$ of the identity in $G$ such that $\forall ^{\ast
}g_{1}\in V$, $f(g_{1}g_{0}x_{0})\in W_{0}f(g_{0}x_{0})$.

Fix a neighborhood $W$ of the identity of $H$ such that $WW^{-1}\subset
W_{0} $. Let $\left( h_{n}\right) $ be a sequence in $H$ such that $\left\{
Wh_{n}:n\in \mathbb{N}\right\} $ is a cover of $H$. Since $Wh_{n}f(x_{0})$
is analytic, the set of elements $x$ of the orbit of $x_{0}$ such that $%
f(x)\in Wh_{n}f(x_{0})$ has the Baire property. Therefore we can find a
sequence $\left( O_{n}\right) $ of open subsets of $G$ with dense union $O$
and a comeager subset $D$ of $O$ such that $\forall g\in D\cap O_{n}$, $%
f(gx_{0})\in Wh_{n}f(x_{0})$. Suppose now that $g_{0}\in D$. Let $n\in 
\mathbb{N}$ be such that $g_{0}\in O_{n}$. Then there exists a neighborhood $%
V$ of the identity of $G$ such that $Vg_{0}\subset O_{n}$. Observe that $%
\left( D\cap O_{n}\right) g_{0}^{-1}\cap V$ is a comeager subset of $V$. If $%
g_{1}\in \left( D\cap O_{n}\right) g_{0}^{-1}\cap V$, then we have%
\begin{equation*}
f(g_{1}g_{0}x_{0})\in Wh_{n}f(x_{0})\text{\quad and\quad }f(g_{0}x_{0})\in
Wh_{n}f(x_{0})\text{.}
\end{equation*}%
Therefore%
\begin{equation*}
f(g_{1}g_{0}x_{0})\in WW^{-1}f(g_{0}x_{0})\subset W_{0}f(g_{0}x_{0})\text{.}
\end{equation*}%
This concludes the proof of the claim.

From the claim and the Kuratowski-Ulam theorem, one deduces that there
exists a dense $G_{\delta }$ subset $C_{0}$ of $X$ such that for every $x\in
C_{0}$ there exists an open neighborhood $V$ of the identity of $G$ such
that $\forall ^{\ast }g\in V$, $f(gx)\in Wf(x)$. Since $f$ is
Baire-measurable, we can furthermore assume that the restriction of $f$ to $%
C_{0}$ is continuous.

Fix now a countable basis $\left( W_{k}\right) $ of open neighborhoods of
the identity of $H$ and a countable basis $\left( V_{n}\right) $ of open
neighborhoods of the identity in $G$. Let $N:X\times \mathbb{N}\rightarrow 
\mathbb{N}\cup \left\{ \infty \right\} $ be the function that assigns to $%
\left( x,k\right) $ the least $n\in \mathbb{N}$ such that $\forall ^{\ast
}g\in V_{n}$, $f(gx)\in W_{k}f(x)$ if such an $n$ exists and $x\in C_{0}$,
and $\infty $ otherwise. Then $N$ is an analytic function, and hence one can
find a dense $G_{\delta }$ subset $C_{1}$ of $X$ contained in $C_{0}$ such
that $N|_{C_{1}\times \mathbb{N}}$ is continuous. By \cite[Proposition 3.2.5
and Theorem 3.2.7]{gao_invariant_2009} the set $C:=\left\{ x\in
C_{1}:\forall ^{\ast }g\in G,gx\in C_{1}\right\} $ is a dense $G_{\delta }$
subset of $X$ such that $\forall x\in C$, $\forall ^{\ast }g\in G$, $gx\in C$%
.\ Therefore $C$ satisfies the desired conclusions.
\end{proof}

\subsection{Generic homomorphisms between Becker graphs\label%
{Sbs:digraph-homomorphism}}

In this section we use the Becker-embedding game and the orbit continuity
lemma to address Problem \ref{problem} for the class of CLI groups.

\begin{definition}
An equivalence relation $E$ on a Polish space $X$ is CLI-classifiable if it
is Borel reducible to $E_{H}^{Y}$ for some CLI group $H$ and Polish $H$%
-space $Y$.
\end{definition}

We will obtain below an obstruction to CLI-classifiability in terms of the
Becker digraph. This will be based upon the following properties of the
Becker digraph:

\begin{enumerate}
\item the Becker digraph contains only loops in the case of CLI group
actions (Lemma \ref{lemma CLI totally Becker disco}), and

\item a Baire-measurable homomorphism between orbit equivalence relations
induces, after restricting to an invariant dense $G_{\delta }$ set, a
homomorphism at the level of Becker digraphs (Proposition \ref%
{Proposition:digraph-homomorphism}).
\end{enumerate}

\begin{lemma}
\label{lemma CLI totally Becker disco}If $Y$ is a Polish $H$-space and $H$
is a CLI group, then the Becker digraph $\mathcal{B}\left( Y/H\right) $
contains only loops.
\end{lemma}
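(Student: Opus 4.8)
The plan is to show that if $H$ is CLI and $x \preccurlyeq_{\mathcal{B}} y$ in a Polish $H$-space $Y$, then $x$ and $y$ lie in the same $H$-orbit (so the only arrows in $\mathcal{B}(Y/H)$ are loops). By Lemma \ref{Lemma Becker embed= winning strat for II}, $x \preccurlyeq_{\mathcal{B}} y$ is equivalent to right $\iota$-embeddability of $x$ into $y$: there is a right Cauchy sequence $(h_n)_{n \in \omega}$ in $H$ with $h_n y \to x$. The key point is that since $H$ is CLI, it admits a compatible \emph{complete} right-invariant metric $d$ (this is the equivalent formulation of CLI noted at the start of Section \ref{Section:CLI}, citing \cite[3.A.2. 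Proposition]{becker_polish_1998}). A right Cauchy sequence is Cauchy with respect to \emph{every} right-invariant metric (by the right analogue of \cite[Proposition 3.B.1]{becker_polish_1998}), in particular with respect to this complete $d$. Hence $(h_n)$ converges in $H$ to some element $h \in H$.

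First I would invoke completeness to get $h_n \to h$ in $H$. Then, since the action $H \curvearrowright Y$ is continuous, the map $g \mapsto gy$ is continuous, so $h_n y \to h y$. On the other hand $h_n y \to x$ by hypothesis, and limits in the Hausdorff (Polish) space $Y$ are unique, so $x = hy$. Therefore $x$ belongs to the $H$-orbit of $y$, i.e.\ $[x] = [y]$ in $Y/H$. This shows the arrow from $[x]$ to $[y]$ in $\mathcal{B}(Y/H)$ forces $[x] = [y]$, so every arrow is a loop. (Since right $\iota$-embeddability is reflexive — the constant sequence $h_n = 1$ witnesses $x \preccurlyeq_{\mathcal{B}} x$ — the loops are indeed all present, confirming "contains only loops" in the sense intended.)

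The argument is short and there is no real obstacle; the only thing to be careful about is correctly using the right-invariant (as opposed to left-invariant) completeness, matching the right variant of $\iota$-embeddability that the paper has fixed. One should also make sure the equivalence "right Cauchy $\Rightarrow$ Cauchy for the chosen complete $d$" is cleanly cited: this is exactly the point of the definition of right Cauchy given after Definition \ref{Definition:iota}, being the right analogue of \cite[Proposition 3.B.1]{becker_polish_1998}, together with the existence of a complete right-invariant metric for CLI groups. Given those facts, completeness plus continuity of the action plus uniqueness of limits immediately yield $x = hy$, and the lemma follows.
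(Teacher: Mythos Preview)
Your proof is correct and rests on the same core idea as the paper's: in a CLI group a right Cauchy sequence converges, so $h_n y \to x$ forces $x = hy$ and hence $[x]=[y]$. The only difference is packaging: the paper argues directly from the game $\mathrm{Emb}(x,y)$, exhibiting a strategy for Player~I (small $V_n$'s forcing $h_n := g_n\cdots g_0$ to be $d$-Cauchy) and deriving a contradiction if Player~II could win against it, whereas you first invoke Lemma~\ref{Lemma Becker embed= winning strat for II} to pass to the right $\iota$-embeddability formulation and then apply completeness. Your route is a legitimate shortcut given that the equivalence lemma is already in hand; the paper's version is slightly more self-contained but otherwise identical in substance.
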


\begin{proof}
Fix a compatible complete right-invariant metric $d$ on $H$. For a subset $A$
of $H$ we let $\mathrm{diam}\left( A\right) $ be the diameter of $A$ with
respect to $d$. Let $x,y$ be elements of $Y$ with different $H$-orbits. We
show that Player I has a winning strategy in $\mathrm{Emb}(x,y)$. In the $n$%
-th round Player $I$ plays some symmetric open neighborhood $V_{n+1}$ of the
identity of $H$ with $\mathrm{diam}\left( V_{n+1}\right) <2^{-n}$ and an
open neighborhood $U_{n}$ of $x$ such that the sequence $\left( U_{n}\right) 
$ forms a decreasing basis of neighborhoods of $x$. Let $\left( g_{n}\right) 
$ be the sequence of group elements chosen by Player II, and set $%
h_{n}:=g_{n}\cdots g_{0}$. We claim that such a sequence does not satisfy
the winning condition for Player II in the Becker-embedding game. Suppose by
contradiction that this is the case, and hence $\lim_{n}h_{n}y=x$. For every 
$n>m$ we have by right invariance of $d$ that 
\begin{equation*}
d(h_{n},h_{m})=d(g_{n}\cdots g_{m+1},1)\leq d(g_{n},1)+d(g_{n-1},1)+\cdots
+d(g_{m+1},1)<2^{-m}\text{.}
\end{equation*}%
Therefore $h_{n}$ is a $d$-Cauchy sequence with respect to $d$. Since by
assumption $d$ is complete, $h_{n}$ converges to some $h\in H$. From $%
\lim_{n}h_{n}y=x$ and continuity of the action, we deduce that $hy=x$. This
contradicts the assumption that the $H$-orbits of $x$ and $y$ are different.
\end{proof}

Using the orbit continuity\ lemma (Lemma \ref{Lemma:orbit-continuity}) one
can then show that a Baire-measurable homomorphism preserves Becker
embeddability on a comeager set. This is the content of the following
proposition.

\begin{proposition}
\label{Proposition:digraph-homomorphism}Suppose that $G,H$ are Polish
groups, $X$ is a Polish $G$-space, and $Y$ is a Polish $H$-space. Let $%
f:X\rightarrow Y$ be a Baire-measurable $\left( E_{G}^{X},E_{H}^{Y}\right) $%
-homomorphism. Then there exists a $G$-invariant dense $G_{\delta }$ subset $%
X_{0}$ of $X$ such that the function $\left[ f\right] :X_{0}/G\rightarrow
Y/H $, $\left[ x\right] \mapsto \left[ f(x)\right] $ is a digraph
homomorphism from the Becker digraph $\mathcal{B}\left( X_{0}/G\right) $ to
the Becker digraph $\mathcal{B}\left( Y/G\right) $.
\end{proposition}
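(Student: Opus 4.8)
The plan is to combine the orbit continuity lemma (Lemma~\ref{Lemma:orbit-continuity}) with the comeager-robustness of winning strategies (Lemma~\ref{Lemma:comeager-Becker}), and the equivalence of Becker-embeddability with right $\iota$-embeddability (Lemma~\ref{Lemma Becker embed= winning strat for II}), to transfer a winning strategy for Player~II in $\mathrm{Emb}(x,x')$ to a winning strategy in $\mathrm{Emb}(f(x),f(x'))$. First I would apply Lemma~\ref{Lemma:orbit-continuity} to obtain the dense $G_\delta$ set $C\subseteq X$, and then set $X_0 := \{x\in X : \forall^\ast g\in G,\ gx\in C\}$, which by \cite[Proposition 3.2.5 and Theorem 3.2.7]{gao_invariant_2009} is a $G$-invariant dense $G_\delta$ subset of $X$; the point of passing from $C$ to $X_0$ is to make the hypothesis ``$\{g : gy\in X_0\}$ is comeager'' of Lemma~\ref{Lemma:comeager-Becker} automatic. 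The claim to prove is then: if $x,x'\in X_0$ and $x\preccurlyeq_{\mathcal B} x'$, then $f(x)\preccurlyeq_{\mathcal B} f(x')$.

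The main construction: fix $x,x'\in X_0$ with $x\preccurlyeq_{\mathcal B}x'$, and describe a winning strategy for Player~II in $\mathrm{Emb}(f(x),f(x'))$. Player~II will internally simulate a run of $\mathrm{Emb}(x,x')$. When the adversary in $\mathrm{Emb}(f(x),f(x'))$ plays an open neighborhood $W_{n+1}$ of $f(x)$ in $Y$ and an open neighborhood $W'_{n+1}$ of $1_H$, Player~II uses the continuity of $f|_C$ to pull $W_{n+1}$ back to an open neighborhood of $x$, and uses the third bullet of Lemma~\ref{Lemma:orbit-continuity} to choose an open neighborhood $U_{n+1}$ of $x$ and an open neighborhood $V_{n+1}$ of $1_G$ small enough that for all $z\in U_{n+1}\cap C$ and comeagerly many $g\in V_{n+1}$ one has $f(gz)\in W'_{n+1}f(z)$ and $gz\in C$; these $U_{n+1},V_{n+1}$ (intersected with a basis so they shrink appropriately, and arranged so $U_{n+1}$ lands inside $f^{-1}(W_{n+1})$ via continuity) are fed to the simulated $\mathrm{Emb}(x,x')$, whose winning strategy returns $g_n\in V_n$ with $g_n\cdots g_0 x'\in U_{n+1}\cap C$ — here I invoke Lemma~\ref{Lemma:comeager-Becker} on the simulated game so that additionally each partial product $g_n\cdots g_0 x'$ stays in the comeager set $C$. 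Player~II then needs to output a group element $g_n^H\in W'_n$ in the real game with $g_n^H\cdots g_0^H f(x')\in W_{n+1}$; one obtains this from the fact that $h_n := g_n\cdots g_0$ gives $h_n x'\to x$ along a right-Cauchy sequence (by Lemma~\ref{Lemma Becker embed= winning strat for II}, after arranging the $V_n$ to have diameter $<2^{-n}$ in a fixed right-invariant metric, if $G$ were CLI — but in general one argues directly from the game), and the continuity properties force $f(h_n x') = f(g_n\cdots g_0 x')$ to approximate $f(x)$ while the increments $f(g_n \cdot (g_{n-1}\cdots g_0 x'))$ lie in $W'_n f(g_{n-1}\cdots g_0 x')$, so the telescoped $H$-elements $h_n^H$ with $h_n^H f(x') = f(h_n x')$ (up to the $H$-orbit) satisfy the winning condition for Player~II in $\mathrm{Emb}(f(x),f(x'))$. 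One must be slightly careful: $f(h_n x')$ need not equal $h_n^H f(x')$ on the nose, but since $f$ is an $(E_G^X,E_H^Y)$-homomorphism, $f(h_n x')$ and $f(x')$ lie in the same $H$-orbit, so there is $h_n^H\in H$ with $h_n^H f(x') = f(h_n x')$; the content is that $h_n^H$ can be chosen so that the consecutive ratios $(h_n^H)(h_{n-1}^H)^{-1}$ land in the prescribed neighborhoods $W'_n$, and this is exactly what the third bullet of Lemma~\ref{Lemma:orbit-continuity} together with Lemma~\ref{Lemma:comeager-Becker} deliver.

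I expect the main obstacle to be the bookkeeping that reconciles the two games: matching up the ``Player~I moves'' of the real game $\mathrm{Emb}(f(x),f(x'))$ with the ``Player~I moves'' of the simulated game $\mathrm{Emb}(x,x')$ so that (a) the $U_n$ in $X$ shrink to $\{x\}$ tightly enough that continuity of $f|_C$ makes $f(U_n\cap C)$ land inside the requested $W_n$, (b) the comeager conditions from Lemma~\ref{Lemma:comeager-Becker} (namely $g_n$ in a prescribed comeager $V_n^\ast$ and $g_{n-1}\cdots g_0 x' \in C$) can be satisfied at each stage, which in turn requires that at each step the relevant ``good'' set of $g$'s — those with $f(gz)\in W'_n f(z)$ and $gz\in C$ — be comeager in $V_n$, using the third bullet of Lemma~\ref{Lemma:orbit-continuity} and the definition of $X_0$, and (c) the output $H$-elements telescope correctly. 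Once the indices are lined up, each individual verification is routine; the only genuinely delicate point is ensuring, via the $X_0$-construction and Lemma~\ref{Lemma:comeager-Becker}, that the orbit points produced by the simulated strategy remain in $C$ so that the continuity clauses of Lemma~\ref{Lemma:orbit-continuity} actually apply to them. Finally, since $\preccurlyeq_{\mathcal B}$ only depends on $G$-orbits and $X_0$ is $G$-invariant, this shows $[f]\colon X_0/G\to Y/H$ is a digraph homomorphism from $\mathcal B(X_0/G)$ to $\mathcal B(Y/H)$, as required (noting loops are preserved automatically since $x\preccurlyeq_{\mathcal B} x$ always holds and $f(x)\preccurlyeq_{\mathcal B} f(x)$).
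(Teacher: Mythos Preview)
Your approach is essentially identical to the paper's: apply Lemma~\ref{Lemma:orbit-continuity} to get $C$, pass to $X_0 = \{x : \forall^\ast g,\ gx\in C\}$, and transfer a winning strategy in $\mathrm{Emb}(x,x')$ to one in $\mathrm{Emb}(f(x),f(x'))$ by feeding Player~I's moves through the third bullet of the orbit continuity lemma and invoking Lemma~\ref{Lemma:comeager-Becker} to keep the trajectory $g_{n-1}\cdots g_0 x'$ inside $C$. One small omission: you need $x\in C$ (not merely $x\in X_0$) in order to invoke continuity of $f|_C$ at $x$ and to apply the third bullet of Lemma~\ref{Lemma:orbit-continuity} with base point $x$; the paper handles this by first replacing $x$ with $gx$ for a generic $g\in G$, which is harmless since $\preccurlyeq_{\mathcal B}$ is orbit-invariant. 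Your detour through Lemma~\ref{Lemma Becker embed= winning strat for II} and right-Cauchy sequences is unnecessary (as you yourself observe); the paper stays entirely at the game level.
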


\begin{proof}
 Let $C$ be a dense $G_{\delta }$ subsets of $X$ obtained from $f$ as in Lemma \ref%
{Lemma:orbit-continuity}. Set $X_{0}:=\left\{ x\in X:\forall ^{\ast }g\in
G,gx\in C\right\} $, which is a $G$-invariant dense $G_{\delta }$ set by 
\cite[Proposition 3.2.5 and Theorem 3.2.7]{gao_invariant_2009}. We claim
that $\left[ f\right] :X_{0}/G\rightarrow Y/H$, $\left[ x\right] \mapsto %
\left[ f(x)\right] $ is a digraph homomorphism from the Becker digraph $%
\mathcal{B}\left( X_{0}/G\right) $ to the Becker digraph $\mathcal{B}\left(
Y/G\right) $.

Fix $x_{0},y_{0}\in X_{0}$ such that $x_{0}\preccurlyeq _{\mathcal{B}}y_{0}$%
. We want to prove that $f(x_{0})\preccurlyeq _{\mathcal{B}}f(y_{0})$.
Observe that $\forall ^{\ast }g\in G$, $gx_{0}\in C\cap X_{0}$. Therefore
after replacing $x_{0}$ with $gx_{0}$ for a suitable $g\in G$ we can assume
that $x_{0}\in C\cap X_{0}$. Let us consider the Becker-embedding game $%
\mathrm{Emb}(f(x_{0}),f(y_{0}))$. At the same time we consider the
Becker-embedding game $\mathrm{Emb}(x_{0},y_{0})$ and use the fact that
Player II has a winning strategy for such a game.

In the first turn of $\mathrm{Emb}(f(x_{0}),f(y_{0}))$, Player I plays an
open neighborhood $\widehat{U}_{1}$ of $f(x_{0})$ and an open neighborhood $%
\widehat{V}_{1}$ of the identity of $H$. Consider an open neighborhood $%
U_{1} $ of $x_{0}$ and an open neighborhood $V_{1}$ of the identity of $G$
such that for any $x\in U_{1}\cap C\cap X_{0}$ and a comeager set of $g\in
V_{1}$ one has that $f\left( gx\right) \in \widehat{V}_{1}f(x)$. Consider
now the round of the game $\mathrm{Emb}(x_{0},y_{0})$ where, in the first
turn, Player I plays the neighborhood $U_{1}$ of $x_{0}$ and the
neighborhood $V_{1}$ of the identity of $G$. Since by assumption Player II
has a winning strategy for $\mathrm{Emb}(x_{0},y_{0})$, we can consider an
element $g_{0}$ of $V_{1}$ which is obtained from such a winning strategy.
By Lemma \ref{Lemma:comeager-Becker}, we can also insist that $g_{0}$
belongs to the comeager set of $g\in V_{1}$ such that $gy_{0}\in U_{1}\cap
C\cap X_{0}$ and $f\left( gy_{0}\right) \in \widehat{V}_{1}f(x)$. We can
then let Player II play, in the first turn of the game $\mathrm{Emb}%
(f(x_{0}),f(y_{0}))$, an element $h_{0}$ of $\widehat{V}_{1}$ such that $%
f\left( g_{0}y_{0}\right) =h_{0}f\left( y_{0}\right) $.

At the $n$-th turn of $\mathrm{Emb}(f(x_{0}),f(y_{0}))$, Player I plays an
open neighborhood $\widehat{U}_{n}$ of $f(x_{0})$ and an open neighborhood $%
\widehat{V}_{n}$ of the identity of $H$. Consider now an open neighborhood $%
U_{n}$ of $x_{0}$ and an open neighborhood $V_{n}$ of the identity of $G$
such that for any $x\in U_{n}\cap C\cap X_{0}$ and a comeager set of $g\in
V_{n}$ one has that $f\left( gx\right) \in \widehat{V}_{n}f(x)$. Let Player
I play, in the $n$-turn of $\mathrm{Emb}(x_{0},y_{0})$, the open
neighborhoods $U_{n}$ of $x_{0}$ and $V_{n}$ of the identity of $G$. Let $%
g_{n-1}\in V_{n}$ be obtained from a winning strategy for Player II. By
Lemma \ref{Lemma:comeager-Becker} we can insist that $g_{n-1}$ belongs to
the comeager set of $g\in V_{n}$ such that $gg_{n-2}\cdots g_{1}g_{0}y\in
U_{n}\cap C\cap X_{0}$ and $f\left( gx\right) \in \widehat{V}_{1}f(x)$.
Therefore we can let Player II play, in the $n$-th turn of the game $\mathrm{%
Emb}(f(x_{0}),f(y_{0}))$, an element $h_{n-1}\in \widehat{V}_{n-1}$ such
that $f\left( g_{n-1}\cdots g_{0}y\right) =h_{n-1}f\left( g_{n-1}\cdots
g_{0}y\right) =h_{n-1}\cdots h_{0}y\in \widehat{U}_{n}$. Such a construction
witness that Player II has a winning strategy for the game $\mathrm{Emb}%
(f(x_{0}),f(y_{0}))$.
\end{proof}

From Lemma \ref{lemma CLI totally Becker disco} and Proposition \ref%
{Proposition:digraph-homomorphism} one can immediately deduce the following
criterion to show that the orbit equivalence relation of a Polish group
action is not Borel reducible to the orbit equivalence relation of CLI group
action.

\begin{theorem}
\label{Theorem:criterion}Suppose that $X$ is a Polish $G$-space. If for any $%
G$-invariant dense $G_{\delta }$ subset $C$ of $X$ there exist $x,y\in C$
with different $G$-orbits such that $x\preccurlyeq _{\mathcal{B}}y$, then
for any $G$-invariant dense $G_{\delta }$ subset $C$ of $X$ the relation $%
E_{C}^{X}$ is not CLI-classifiable.
\end{theorem}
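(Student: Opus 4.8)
The plan is to derive the theorem by a straightforward contradiction argument combining the two ingredients that have already been assembled: that CLI-actions produce a Becker digraph with only loops (Lemma \ref{lemma CLI totally Becker disco}), and that a Baire-measurable homomorphism of orbit equivalence relations generically descends to a digraph homomorphism of Becker digraphs (Proposition \ref{Proposition:digraph-homomorphism}). So suppose, toward a contradiction, that some $G$-invariant dense $G_\delta$ subset $C_0$ of $X$ has the property that $E^X_{C_0}$ is CLI-classifiable, witnessed by a Borel reduction $f\colon C_0\to Y$ to $E^Y_H$ for a CLI group $H$ and a Polish $H$-space $Y$. A Borel reduction is in particular a Baire-measurable $(E^{C_0}_G,E^Y_H)$-homomorphism, and $C_0$, being a dense $G_\delta$ in the Polish $G$-space $X$, is itself a Polish $G$-space.

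Applying Proposition \ref{Proposition:digraph-homomorphism} to $f\colon C_0\to Y$, I obtain a $G$-invariant dense $G_\delta$ subset $X_0$ of $C_0$ such that $[f]\colon X_0/G\to Y/H$ is a digraph homomorphism from $\mathcal{B}(X_0/G)$ to $\mathcal{B}(Y/H)$. Note $X_0$ is also dense $G_\delta$ in $X$ (a dense $G_\delta$ of a dense $G_\delta$ is dense $G_\delta$ in the ambient space). Now invoke the hypothesis of the theorem with $C:=X_0$: there exist $x,y\in X_0$ with different $G$-orbits such that $x\preccurlyeq_{\mathcal{B}}y$ in $X$. The one point needing a brief check is that $x\preccurlyeq_{\mathcal{B}}y$ holds also with respect to the restricted $G$-space $X_0$ rather than $X$: this is where I would note that the winning condition of $\mathrm{Emb}(x,y)$ only ever evaluates points $g_{n-1}\cdots g_0 y$ and asks them to lie in open neighborhoods of $x$, and since $X_0$ is $G$-invariant with $y\in X_0$, all such points lie in $X_0$, so the subspace topology on $X_0$ does not change the game — a winning strategy for Player II in $X$ restricts to one in $X_0$. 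Hence $[x]$ and $[y]$ are vertices of $\mathcal{B}(X_0/G)$ joined by an arrow, and because $[f]$ is a digraph homomorphism, there is an arrow from $[f(x)]$ to $[f(y)]$ in $\mathcal{B}(Y/H)$, i.e. $f(x)\preccurlyeq_{\mathcal{B}}f(y)$.

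On the other hand, since $f$ is a reduction and $x\not\mathrel{E^X_G}y$, we have $f(x)\not\mathrel{E^Y_H}f(y)$, so $f(x)$ and $f(y)$ have different $H$-orbits. But $H$ is CLI, so by Lemma \ref{lemma CLI totally Becker disco} the Becker digraph $\mathcal{B}(Y/H)$ contains only loops, meaning there is no arrow between the distinct vertices $[f(x)]$ and $[f(y)]$ — contradicting $f(x)\preccurlyeq_{\mathcal{B}}f(y)$. This contradiction shows that no $G$-invariant dense $G_\delta$ subset $C$ of $X$ can have $E^X_C$ CLI-classifiable, which is the assertion of the theorem.

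The argument has essentially no hard step: the real content was proved already in Lemma \ref{lemma CLI totally Becker disco} and Proposition \ref{Proposition:digraph-homomorphism}. The only mild subtlety to be careful about is the bookkeeping around passing to subspaces — that a dense $G_\delta$ of $X$ restricted further is still dense $G_\delta$ in $X$, that the restriction of a Polish $G$-space to an invariant dense $G_\delta$ is again a Polish $G$-space, and that Becker-embeddability is insensitive to such a restriction. I expect this to be the place where one must phrase things precisely, but it is routine. One should also remark that the hypothesis is applied to the particular dense $G_\delta$ set $X_0$ produced by the Proposition, not to the arbitrary $C$ in the conclusion; the conclusion about arbitrary $C$ then follows because any such $C$ is itself a Polish $G$-space to which the same reasoning applies verbatim.
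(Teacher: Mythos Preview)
Your proposal is correct and follows essentially the same route as the paper: assume a Borel reduction to a CLI action exists, apply Proposition~\ref{Proposition:digraph-homomorphism} to get a digraph homomorphism on a smaller invariant dense $G_\delta$ set, invoke the hypothesis there to obtain a nontrivial Becker arrow, push it forward, and contradict Lemma~\ref{lemma CLI totally Becker disco}. The only difference is cosmetic---the paper phrases it directly (showing any Baire-measurable homomorphism fails to be a reduction) rather than by contradiction---and you are slightly more explicit than the paper about why Becker-embeddability is unaffected when passing to a $G$-invariant subspace, a point the paper leaves implicit.
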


\begin{proof}
Suppose that $H$ is a CLI group, and $Y$ is a Polish $H$-space. Suppose that 
$D$ is a $G$-invariant dense $G_{\delta }$ subset of $X$, and $%
f:D\rightarrow Y$ is a Borel $\left( E_{G}^{D},E_{H}^{Y}\right) $%
-homomorphism. Then by Proposition \ref{Proposition:digraph-homomorphism}
there exists a $G$-invariant dense $G_{\delta }$ subset $C$ of $D$ such that 
$\left[ f\right] :C/G\rightarrow Y/H$ is a digraph homomorphisms for the
Becker digraphs $\mathcal{B}\left( C/G\right) $ and $\mathcal{B}\left(
Y/H\right) $. By assumption there exist elements $x,y$ of $C$ with different 
$G$-orbits such that $x\preccurlyeq _{\mathcal{B}}y$. Therefore $%
f(x)\preccurlyeq _{\mathcal{B}}f(y)$. Since $H$ is CLI we have by Lemma \ref%
{lemma CLI totally Becker disco} that $f(x)$ and $f(y)$ belong to the same $%
H $-orbit. Therefore $f$ is not a reduction from $E_{G}^{D}$ to $E_{H}^{Y}$.
\end{proof}

\subsection{Applications}

Suppose that $E$ is an equivalence relation on a Polish space $X$. Recall
that the Friedman--Stanley jump $E^{+}$ of $E$ \cite[Definition 8.3.1]%
{gao_invariant_2009}---see also \cite{friedman_borel_1989}---is the
equivalence relation on the standard Borel space $X^{\mathbb{N}}$ of
sequences of elements of $X$ defined by $\left( x_{n}\right) E^{+}\left(
y_{n}\right) $ if and only if $\left\{ \left[ x_{n}\right] _{E}:n\in \mathbb{%
N}\right\} =\left\{ \left[ y_{n}\right] _{E}:n\in \mathbb{N}\right\} $.

In particular one can start with the relation $=$ of equality on a perfect
Polish space $X$. The corresponding Friedman--Stanley jump is the relation $%
=^{+}$ on $X^{\omega }$ defined by $\left( x_{n}\right) =^{+}\left(
y_{n}\right) $ if and only if the sequences $\left( x_{n}\right) $ and $%
\left( y_{n}\right) $ have the same range. With respect to Borel
reducibility, $=^{+}$ is the most complicated (essentially) $\boldsymbol{\Pi 
}_{3}^{0}$ equivalence relation \cite[Theorem 12.5.5]{gao_invariant_2009};
see also \cite{hjorth_borel_1998}.

Hjorth has proven in \cite[Theorem 5.19]{hjorth_absoluteness_1998} that $%
=^{+}$ is not Borel reducible to the orbit equivalence relation of a
continuous action of an \emph{abelian} Polish group. As remarked in \cite[%
page 663]{hjorth_absoluteness_1998}, Hjorth's proof uses a metamathematical
argument involving forcing and Stern's absoluteness principle . Similar
methods are used in \cite[Theorem 17.1.3]{kanovei_borel_2008} to prove that $%
=^{+}$ is not Borel reducible to the orbit equivalence relation of a Borel
action of a CLI group. This is obtained as a consequence of a general result
concerning pinned equivalence relations; see \cite[Definition 17.1.2]%
{kanovei_borel_2008}. To our knowledge, the argument below provides the
first entirely classical proof of this result.

Let $\sigma :X^{\mathbb{N}}\rightarrow X^{\mathbb{N}}$ be the \emph{%
unilateral shift }$\left( x_{1},x_{2},\ldots \right) \mapsto \left(
x_{2},x_{3},\ldots \right) $. We consider the restriction of $=^{+}$ to the
dense $G_{\delta }$ subset $Y$ of $X^{\mathbb{N}}$ that consists of
injective sequences. Observe that this is the orbit equivalence relation of
the canonical action of $S_{\infty }$ on $X^{\mathbb{N}}$ obtained by
permuting the indices.

\begin{theorem}
\label{Theorem:countable-set}Let $Z\subset Y$ be a nonempty $S_{\infty }$%
-invariant $G_{\delta }$ set such that $\sigma \left[ Z\right] =Z$. The
restriction of $=^{+}$ to any $S_{\infty }$-invariant dense $G_{\delta }$
subset of $Z$ is not Borel reducible to a Borel action of a CLI group on a
standard Borel space.
\end{theorem}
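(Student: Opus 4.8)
The plan is to apply Theorem \ref{Theorem:criterion} to the Polish $S_{\infty}$-space $Z$ with the topology inherited as an invariant $G_{\delta}$ subspace of $X^{\mathbb{N}}$. By that theorem, it suffices to show that for every $S_{\infty}$-invariant dense $G_{\delta}$ subset $C$ of $Z$, there exist $x,y\in C$ with different $S_{\infty}$-orbits such that $x\preccurlyeq_{\mathcal{B}}y$. By Lemma \ref{Lemma Becker embed= winning strat for II}, this amounts to producing an injective $F$-embedding in the sense appropriate to $S_{\infty}$: concretely, since the $S_{\infty}$-action on $X^{\mathbb{N}}$ is just permutation of indices, $x\preccurlyeq_{\mathcal{B}}y$ holds precisely when there is a right Cauchy sequence $(h_n)$ in $S_{\infty}$ with $h_n y\to x$, which translates into the combinatorial statement that the range of $x$ is contained in the range of $y$ together with a coherence condition on how the index permutation is built. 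In fact, if $\mathrm{ran}(x)\subsetneq\mathrm{ran}(y)$ and we can realize the inclusion by permutations $h_n$ that stabilize longer and longer initial segments of $\mathbb{N}$, then $(h_n)$ is right Cauchy (in the standard right-invariant metric on $S_{\infty}$, right Cauchy means the images $h_n(k)$ stabilize for each $k$), so $x\preccurlyeq_{\mathcal{B}}y$; and $x,y$ lie in different orbits exactly when their ranges differ as sets, i.e.\ $=^{+}$-classes differ.

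The core of the argument is therefore the following density lemma: given any $S_{\infty}$-invariant dense $G_{\delta}$ subset $C$ of $Z$, one can find an injective sequence $y\in C$ and an element $x\in C$ obtained from $y$ by ``deleting one entry'' — that is, $\mathrm{ran}(x)=\mathrm{ran}(y)\setminus\{p\}$ for some $p$ — with $x$ still in $C$. To arrange this I would exploit the hypothesis $\sigma[Z]=Z$: the shift $\sigma$ removes the first coordinate, and on injective sequences $\sigma$ maps the $=^{+}$-class of $(x_1,x_2,\ldots)$ to the $=^{+}$-class of $(x_2,x_3,\ldots)$, whose range is $\mathrm{ran}(x)\setminus\{x_1\}$ whenever $x_1$ does not reappear — which, for injective sequences, is always. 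Thus $\sigma$ is a Borel map on $Z$ witnessing exactly the ``delete one entry'' operation at the level of orbits. Concretely, I would fix a countable basis of $Z$ and, given the dense $G_{\delta}$ set $C$ with $C=\bigcap_n C_n$, $C_n$ open dense, run a Baire-category / Banach--Mazur style construction: build a decreasing sequence of basic open sets in $Z$ meeting every $C_n$, and simultaneously control $\sigma$ of the limit point, using that $\sigma$ is continuous and open on $Z$ (it is the restriction of a continuous open surjection on $X^{\mathbb{N}}$ when restricted to injective sequences with the coordinate omitted), so that $\sigma^{-1}[C]$ is also comeager in $Z$; then pick $y\in C\cap\sigma^{-1}[C]$ generic and set $x:=\sigma(y)$. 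Both $x$ and $y$ are then in $C$, they have distinct ranges (injectivity of $y$ means $y_1\notin\mathrm{ran}(\sigma(y))$), hence distinct $=^{+}$-classes, and $\mathrm{ran}(x)\subset\mathrm{ran}(y)$ with the witnessing permutations fixing ever-longer initial segments, giving $x\preccurlyeq_{\mathcal{B}}y$.

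The main obstacle I anticipate is the verification that $\sigma^{-1}[C]$ is comeager in $Z$, i.e.\ that $\sigma$ is sufficiently nice as a self-map of $Z$ — it is not a homeomorphism, so one cannot simply push the $G_{\delta}$ set forward. The correct statement to prove is that $\sigma\colon Z\to Z$ is a continuous, open, surjective map (surjectivity and the property of being open both following from $\sigma[Z]=Z$ together with the fact that the shift on $X^{\mathbb{N}}$, restricted to the injective sequences, is open with $\sigma^{-1}(\text{basic open})$ again basic open after adjoining a free first coordinate); a continuous open surjection pulls comeager sets back to comeager sets, which is exactly what is needed. A secondary point requiring care is confirming, via Lemma \ref{Lemma Becker embed= winning strat for II} and the description of Becker-embeddability for $S_{\infty}$-spaces given after Proposition \ref{Proposition:digraph-homomorphism}, that the inclusion of ranges together with the initial-segment-stabilizing permutations really does certify $x\preccurlyeq_{\mathcal{B}}y$ in the topology $t_F$ that makes $Z$ Polish — but since $Z$ carries the subspace topology from the product topology on $X^{\mathbb{N}}$ (the atomic fragment), an $F$-embedding here is literally just an injection of $\mathbb{N}$ into $\mathbb{N}$ carrying $y$ to $x$ coordinatewise in the limit, which the construction supplies. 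Finally one invokes Theorem \ref{Theorem:criterion} with the role of $X$ played by (any $S_{\infty}$-invariant dense $G_{\delta}$ subset of) $Z$ to conclude non-CLI-classifiability.
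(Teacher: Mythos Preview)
Your proposal is correct and follows essentially the same route as the paper: apply Theorem~\ref{Theorem:criterion} by using that $\sigma\colon Z\to Z$ is continuous, open, and surjective to find $y\in C$ with $x=\sigma(y)\in C$, and then observe that $\mathrm{Ran}(x)\subsetneq\mathrm{Ran}(y)$ yields $x\preccurlyeq_{\mathcal B}y$ with $[x]\neq[y]$. Your direct observation that a continuous open map pulls comeager sets back to comeager sets is in fact a mild simplification of the paper's argument, which instead invokes the fiberwise Kuratowski--Ulam theorem of \cite{melleray_generic_2013} to reach the same conclusion.
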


\begin{proof}
Let $E$ be the restriction of $=^{+}$ to $Z$. As observed before, $E$ is the
orbit equivalence relation of the canonical action $S_{\infty
}\curvearrowright Z\subset Y\subset X^{\mathbb{N}}$ given by permuting the
coordinates. We apply Proposition \ref{Theorem:criterion}. Let $C$ be an $%
S_{\infty }$-invariant dense $G_{\delta }$ subset of $Z$. We need to prove
that there exist $x,y\in C$ with different orbits such that $x\preccurlyeq _{%
\mathcal{B}}y$. For $x=\left( x_{n}\right) \in Y$ we let $\mathrm{Ran}(x)$
be the set $\left\{ x_{n}:n\in \mathbb{N}\right\} $. It is not difficult to
see that, for $x,y\in Y$, $x\preccurlyeq _{\mathcal{B}}y$ if and
only if $\mathrm{Ran}(x)\subset \mathrm{Ran}(y)$. Observe that $\sigma
:Z\rightarrow Z$ is continuous, open, and surjective. Therefore, since $C$
is a dense $G_{\delta }$ subset of $Z$, we have that there exists a comeager
subset $C_{0}$ of $C$ such that, for every $x$ $\in C_{0}$, $\sigma
^{-1}\left( x\right) \cap C$ is a comeager subset of $\sigma ^{-1}\left(
x\right) $; see \cite[Theorem A.1]{melleray_generic_2013}. Pick now $x\in
C_{0}$ and $y\in \sigma ^{-1}\left( x\right) \cap C$. It is clear that $%
x\preccurlyeq _{\mathcal{B}}y$ and $x,y$ lie in different $S_{\infty }$%
-orbits. This concludes the proof.
\end{proof}

We now apply Theorem \ref{Theorem:countable-set} to obtain information about
the orbit equivalence relation of some canonical actions of the unitary
group $\mathcal{U}\left( \mathcal{H}\right) $.\ Let $\mathcal{H}$ be the
separable infinite-dimensional Hilbert space, and let $\mathcal{U}\left( 
\mathcal{H}\right) $ be the group of unitary operators on $\mathcal{H}$.
This is a Polish group when endowed with the weak operator topology; see 
\cite[Proposition I.3.2.9]{blackadar_operator_2006}. The group $\mathcal{U}%
\left( \mathcal{H}\right) $ admits an action by conjugation on itself and on
the space $\mathcal{B}\left( \mathcal{H}\right) _{sa}$ of selfadjoint
operators.

\begin{theorem}
The following relations are not Borel reducible to a Borel action of a CLI
group on a standard Borel space:

\begin{enumerate}
\item unitary equivalence of unitary operators;

\item unitary equivalence of selfadjoint operators.
\end{enumerate}
\end{theorem}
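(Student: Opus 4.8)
The plan is to derive both statements from Theorem~\ref{Theorem:countable-set} by exhibiting suitable Borel reductions from the relation $=^{+}$ (restricted to an appropriate $S_{\infty}$-invariant dense $G_{\delta}$ set of the shape covered by that theorem) into the two conjugation actions of $\mathcal{U}(\mathcal{H})$. For part (1), the natural map sends an injective sequence $(\lambda_{n})$ of distinct complex numbers of modulus one to the diagonal unitary operator $\bigoplus_{n} \lambda_{n}$ on $\mathcal{H}=\ell^{2}(\mathbb{N})$; two such operators are unitarily conjugate precisely when their spectra, counted together with multiplicities of their point spectrum, coincide, and since the sequences are injective this reduces to the ranges being equal, i.e.\ to $=^{+}$. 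For part (2) one does the same with a sequence of distinct reals and the self-adjoint operator $\bigoplus_{n}\lambda_{n}$, or alternatively one composes the reduction of part (1) with a Borel reduction coming from a homeomorphism of the circle minus a point onto $\mathbb{R}$, or with the Cayley transform, to pass between unitary and self-adjoint operators. The key point is that each of these maps is Borel (indeed continuous on the relevant Polish space of parameter sequences with the appropriate operator topology) and is a genuine reduction, so nonreducibility of $=^{+}$ to CLI group actions transfers.

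First I would fix a countable set $D$ of, say, rational points dense in the unit circle (or in $\mathbb{R}$), and let $X$ be the Polish space $\mathbb{T}\setminus D$ (or $\mathbb{R}\setminus D$), which is still perfect Polish; working with $X$ rather than all of $\mathbb{T}$ is a harmless device to keep eigenvalues away from a fixed countable set so that the spectral bookkeeping below is clean. Let $Y\subseteq X^{\mathbb{N}}$ be the $G_{\delta}$ set of injective sequences, carrying the canonical $S_{\infty}$-action, and let $\sigma$ be the unilateral shift as in the statement of Theorem~\ref{Theorem:countable-set}. I would then check that there is a nonempty $S_{\infty}$-invariant $G_{\delta}$ set $Z\subseteq Y$ with $\sigma[Z]=Z$ on which the reduction $x\mapsto A_{x}$ (where $A_{x}$ is the corresponding diagonal operator) takes values in the appropriate standard Borel space, for instance by intersecting with the set of sequences whose range has closure equal to all of $\mathbb{T}$ (resp.\ is bounded), which is $S_{\infty}$-invariant, $G_{\delta}$, and $\sigma$-invariant; then Theorem~\ref{Theorem:countable-set} applies to any $S_{\infty}$-invariant dense $G_{\delta}$ subset of $Z$, in particular to $Z$ itself if it is dense, or after shrinking.

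The substantive verification is that $x\mapsto A_{x}$ is an $( =^{+}\!\restriction Z,\ {\sim}_{\mathcal{U}(\mathcal{H})})$-reduction. One direction is immediate: if $(x_{n})$ and $(y_{n})$ have the same range, a permutation of $\mathbb{N}$ matching them up induces a permutation unitary conjugating $A_{x}$ to $A_{y}$. For the converse I would invoke the spectral theorem for normal (resp.\ self-adjoint) operators: for a diagonalizable operator $A_{x}=\bigoplus_{n}x_{n}$ with all $x_{n}$ distinct, the point spectrum is exactly $\{x_{n}:n\}$, each eigenvalue has multiplicity one, and the continuous part of the spectral measure is zero; since a unitary conjugacy preserves the unitary equivalence class of the spectral measure, hence the point spectrum with multiplicities, $A_{x}\sim_{\mathcal{U}(\mathcal{H})} A_{y}$ forces $\{x_{n}:n\}=\{y_{n}:n\}$, i.e.\ $(x_{n})=^{+}(y_{n})$. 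I would also record that $x\mapsto A_{x}$ is Borel: in the weak (resp.\ strong) operator topology the matrix entries of $A_{x}$ depend continuously on $x$, so the map is continuous, a fortiori Borel.

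The main obstacle I anticipate is purely bookkeeping rather than conceptual: one must be slightly careful to arrange the reduction to land inside a Polish space to which Theorem~\ref{Theorem:countable-set} genuinely applies, i.e.\ to produce the $S_{\infty}$-invariant, $\sigma$-invariant, dense $G_{\delta}$ set $Z$ and to confirm density of the image constraints, and to handle the passage between the unitary and self-adjoint pictures (the Cayley transform $A\mapsto (A-i)(A+i)^{-1}$ is the cleanest bridge, mapping self-adjoint operators to unitaries with $1$ not in the point spectrum, and is $\sim_{\mathcal{U}(\mathcal{H})}$-equivariant). Once these routine points are in place, both nonreducibility statements follow from Theorem~\ref{Theorem:countable-set} since a Borel reduction composed with the Borel reduction $x\mapsto A_{x}$ would give a Borel reduction of $=^{+}\!\restriction Z$ into a CLI group action, contradicting that theorem.
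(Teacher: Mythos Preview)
Your approach is correct and is essentially the paper's: reduce $=^{+}$ restricted to injective sequences into unitary (resp.\ selfadjoint) conjugacy via diagonal operators, then invoke Theorem~\ref{Theorem:countable-set}. The paper's execution is considerably leaner than what you sketch, however. There is no need to excise a countable dense set from $\mathbb{T}$, nor to pass to a proper subset $Z$ of $Y$: the full set $Y$ of injective sequences already satisfies $\sigma[Y]=Y$ and is $S_{\infty}$-invariant $G_{\delta}$, so Theorem~\ref{Theorem:countable-set} applies with $Z=Y$ directly. For the selfadjoint case the paper simply takes $X=[0,1]$ in place of $\mathbb{T}$, which sidesteps any boundedness issue and avoids the Cayley transform detour. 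The ``bookkeeping obstacle'' you anticipate therefore does not arise.
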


\begin{proof}
As in Theorem \ref{Theorem:countable-set} we consider the equivalence
relation $=^{+}$ on the set $X^{\mathbb{N}}$ of sequences of elements of a
perfect Polish space $X$. Fix an orthonormal basis $\left( e_{n}\right) $ of 
$\mathcal{H}$. Let $X\ $be the circle group $\mathbb{T}$, and $Y\subset 
\mathbb{T}^{\mathbb{N}}$ be the set of injective sequences. The map $%
f:Y\rightarrow \mathcal{U}(\mathcal{H})$ which sends an element $(\lambda
_{n})\in Y$ to the unitary operator 
\begin{equation*}
(e_{n})\mapsto (\lambda _{n}e_{n})
\end{equation*}%
is a Borel reduction from $=^{+}|_{Y}$ to unitary equivalence of unitary
operators. The proof of selfadjoint operators is the same, where one
replaces $\mathbb{T}$ with $\left[ 0,1\right] $.
\end{proof}

\section{A game-theoretic approach to turbulence\label{Section:turbulence}}

\subsection{Hjorth's turbulence theory\label{Subsection:theory}}

Suppose that $\mathcal{L}=\left( R_{i}\right) _{i\in I}$ is a countable
first order relational language, where $R_{i}$ is a relation symbol with
arity $n_{i}$. We denote as above by $\mathrm{Mod}\left( \mathcal{L}\right) $
the Polish $S_{\infty }$-space of $\mathcal{L}$-structures with support $%
\mathbb{N}$. Recall that a Polish group $G$ is called \emph{non-Archimedean }%
if it admits a neighborhood basis of the identity of open subgroups or,
equivalently, it is isomorphic to a closed subgroup of $S_{\infty }$; see 
\cite[Theorem 1.5.1]{becker_descriptive_1996}. A relation $E$ is \emph{%
classifiable by countable structures }if it is Borel reducible to the
isomorphism relation in $\mathrm{Mod}\left( \mathcal{L}\right) $ for some
countable first order relational language $\mathcal{L}$. This is equivalent
to the assertion that $E$ is Borel reducible to the orbit equivalence
relation of a Borel action of a non-Archimedean Polish group $G$ on a
standard Borel space by \cite[Theorem 5.1.11]{becker_descriptive_1996} and 
\cite[Theorem 3.5.2, Theorem 11.3.8]{gao_invariant_2009}.

\emph{Turbulence} is a dynamical condition on a Polish $G$-space $X$ which
is an obstruction of classifiability of $E_{G}^{X}$ by countable structures.
We now recall here the fundamental notions of the theory of turbulence,
developed by Hjorth in \cite{hjorth_classification_2000}. Suppose that $X$
is a Polish $G$-space, $x\in X$, $U$ is a neighborhood of $x$, and $V$ is a
neighborhood of the identity in $G$. The local orbit $\mathcal{O}(x,U,V)$ is
the smallest subset of $U$ with the property that $x\in \mathcal{O}(x,U,V)$,
and if $g\in V$, $x\in \mathcal{O}(x,U,V)$, and $gx\in U$, then $gx\in 
\mathcal{O}(x,U,V)$. A point $x\in X$ is called \emph{turbulent} if it has
dense orbit and, for any neighborhood $U$ of $x$ and neighborhood $V$ of the
identity in $G$, the closure of $\mathcal{O}(x,U,V)$ is a neighborhood of $x$%
. A Polish $G$-space $X$ is \emph{preturbulent} if every point $x\in X$ is
turbulent, and \emph{turbulent} if every point $x\in X$ is turbulent and has
meager orbit.

An equivalence relation $E$ on a Polish space $X$ is generically $S_{\infty
} $-ergodic if, for any Polish $S_{\infty }$-space $Y$ and Baire-measurable $%
\left( E,E_{S_{\infty }}^{Y}\right) $-homomorphism, there exists a comeager
subset of $X$ that is mapped by $f$ to a single $S_{\infty }$-orbit. By \cite%
[Theorem 3.5.2, Theorem 11.3.8]{gao_invariant_2009}, this is equivalent to
the assertion that, for any non-Archimedean Polish group $H$, Polish $H$%
-space $Y$, and Baire measurable $\left( E,E_{H}^{Y}\right) $-homomorphisms,
there exists a comeager subset of $X$ that is mapped by $f$ to a single $H$%
-orbit. The following is the main result in Hjorth's turbulence theory,
providing a dichotomy for preturbulent Polish $G$-spaces.

\begin{theorem}[Hjorth]
\label{Theorem:turbulence}Suppose that $X$ is a preturbulent Polish $G$%
-space. Then the associated orbit equivalence relation $E_{G}^{X}$ is
generically $S_{\infty }$-ergodic. In particular, either $X$ has a dense $%
G_{\delta }$ orbit, or the restriction of $E_{G}^{X}$ to any comeager subset
of $X$ is not classifiable by countable structure.
\end{theorem}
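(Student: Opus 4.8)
The plan is to establish the theorem in the same spirit as the CLI results of Section~\ref{Section:CLI}: we equip the orbit space $X/G$ with a combinatorial structure that a generic Baire-measurable homomorphism into a non-Archimedean space must preserve, and then show that preturbulence forces this structure to ``connect'' a comeager set of points. Concretely, following the local-orbit definition, I would say that two points $x,x'\in X$ are \emph{turbulence-equivalent} if there is a finite chain $x=z_0,z_1,\ldots,z_k=x'$ together with open sets $U_i$, identity neighborhoods $V_i$, so that $z_{i+1}\in \overline{\mathcal{O}(z_i,U_i,V_i)}$ and $z_i\in\overline{\mathcal{O}(z_{i+1},U_i,V_i)}$; more usefully, I would use the characterization (cf.\ \cite[Chapter 10]{gao_invariant_2009}) that relates being in the closure of a small local orbit to a ``turbulence game'' analogous to the Becker-embedding game of Definition~\ref{Becker-game}, in which Player~I plays shrinking neighborhoods $U_n\ni x$ and identity neighborhoods $V_n$, Player~II plays $g_n\in V_n$ subject to $g_n\cdots g_0\, y\in U_n$, but now \emph{also} keeping the intermediate points $g_i\cdots g_0\,y$ inside the local-orbit region. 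This makes the orbit space into a graph whose edge relation is generically preserved by $[f]$ when $f$ is a Baire-measurable $(E_G^X,E_H^Y)$-homomorphism and $H$ is non-Archimedean.

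The two ingredients to prove are then parallel to Lemma~\ref{lemma CLI totally Becker disco} and Proposition~\ref{Proposition:digraph-homomorphism}. First, the analogue of Lemma~\ref{lemma CLI totally Becker disco}: if $H$ is non-Archimedean and $Y$ is a Polish $H$-space, then the turbulence graph on $Y/H$ has no edges between distinct orbits. Indeed, Player~I can use a neighborhood basis of the identity consisting of open \emph{subgroups} $V_n$; then any sequence $(g_n)$ with $g_n\in V_n$ for all $n$ produces $h_n:=g_n\cdots g_0$ which, for $n\ge m$, satisfies $h_n h_m^{-1}\in V_{m}$, so $(h_n)$ is eventually constant modulo each $V_m$, i.e.\ it is left-Cauchy; since closed subgroups of $S_\infty$ are CLI, $h_n$ converges and $\lim_n h_n y = x$ forces $x\in Hy$. (This is exactly the non-Archimedean refinement of the CLI argument, and it is why having a basis of \emph{subgroups} is what matters.) Second, the analogue of Proposition~\ref{Proposition:digraph-homomorphism}: using the orbit continuity lemma (Lemma~\ref{Lemma:orbit-continuity}) and the comeager-refinement lemma for the embedding game (the turbulence-game version of Lemma~\ref{Lemma:comeager-Becker}), one shows that after restricting to a $G$-invariant dense $G_\delta$ set $X_0$, the map $[f]$ sends edges of the turbulence graph on $X_0/G$ to edges of the turbulence graph on $Y/H$.

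The heart of the matter — and the step I expect to be the main obstacle — is the \emph{dynamical} input: if $X$ is preturbulent then \emph{every} $G$-invariant dense $G_\delta$ set $C\subseteq X$ is, up to a comeager piece, a single turbulence-connected component; equivalently, for a comeager set of $x\in C$ the set of $y\in C$ turbulence-equivalent to $x$ is comeager. This is precisely the place where one uses that for each $x$, each $U\ni x$, and each identity neighborhood $V$, the set $\overline{\mathcal{O}(x,U,V)}$ is a neighborhood of $x$: iterating this and intersecting over a countable basis, together with the Kuratowski--Ulam theorem and the fact that local orbits are built from the $G$-action (so meet $C$ comeagerly when $x$ is suitably generic), yields that generically any two points of $C$ are joined by a single-step turbulence edge. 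Granting this, the theorem follows: if $E_G^X\restriction C$ were classifiable by countable structures it would be Borel reducible to $E_H^Y$ for some non-Archimedean $H$ via some Borel $f$; by the homomorphism proposition $[f]$ preserves turbulence edges on a dense $G_\delta$ set $C'\subseteq C$; by the dynamical input a comeager set of points of $C'$ are pairwise turbulence-equivalent, hence mapped by $f$ into a single $H$-orbit by the non-Archimedean analogue of Lemma~\ref{lemma CLI totally Becker disco}; this is generic $S_\infty$-ergodicity of $E_G^X$. The final dichotomy is then standard: a generically $S_\infty$-ergodic orbit equivalence relation is in particular generically ergodic, so either there is a comeager orbit — which, being $G_\delta$ by Effros-type arguments, is a dense $G_\delta$ orbit — or no comeager subset is contained in a single orbit, and then classifiability by countable structures on any comeager set would contradict generic $S_\infty$-ergodicity.
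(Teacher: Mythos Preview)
Your overall architecture is exactly that of the paper: introduce a graph on $X/G$, show it is trivial for non-Archimedean targets, show it is a clique under preturbulence, and show Baire-measurable homomorphisms induce graph homomorphisms on an invariant dense $G_\delta$. The gap is in your ``analogue of Lemma~\ref{lemma CLI totally Becker disco}.'' You assert that closed subgroups of $S_\infty$ are CLI and hence a right-Cauchy sequence $(h_n)$ converges. This is false: $S_\infty$ itself is the prototypical non-CLI Polish group (indeed, the non-Archimedean CLI groups are exactly the automorphism groups of countable structures with no uncountable model; see \cite{gao_automorphism_1998}). Concretely, with $V_n=\{g\in S_\infty: g\restriction n=\mathrm{id}\}$ your condition $h_nh_m^{-1}\in V_{m+1}$ only says that $(h_n^{-1})$ converges pointwise to some injection $\omega\to\omega$, which need not be surjective; so $(h_n)$ need not converge in $S_\infty$. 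In the logic-action picture this is precisely the statement that the one-sided Becker game detects \emph{embeddings}, not isomorphisms, so the Becker digraph on $\mathrm{Mod}(\mathcal L)/S_\infty$ is very far from containing only loops.

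The paper repairs this by replacing the one-sided Becker game with a genuinely two-sided back-and-forth game (Definition~\ref{Definition:Hjorth-game}): Player~I alternately shrinks neighborhoods of moving points $x_n$ and $y_n$, and Player~II responds with \emph{products} $g_n^x=h_k\cdots h_0$ and $g_n^y$ of elements from the current identity neighborhoods, staying inside the current $U$-sets. In the non-Archimedean case one lets the $V_n$ be open subgroups, so each product $g_n^x$ still lies in $V_n^x$; the alternation then forces both $(g_n^x\cdots g_0^x)$ and $(g_n^y\cdots g_0^y)$ to be Cauchy in a fixed complete (two-sided) metric and both $(x_n),(y_n)$ to converge to a common limit, whence $x$ and $y$ are in the same orbit (Lemma~\ref{Lemma:nonarchimedean}). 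On the preturbulent side, the ``products staying in $U$'' clause is exactly tailored to local orbits, and the paper shows directly that the resulting Hjorth graph is a \emph{clique} (Lemma~\ref{Lemma:turbulent}), which is sharper than your ``comeagerly a single connected component'' and makes the conclusion immediate. If you rewrite your game as a back-and-forth (so that both $x$ and $y$ move), your outline goes through; but the one-sided game you sketched cannot separate non-Archimedean from CLI.
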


In this section, for each Polish $G$-space $X$, we define a graph structure $%
\mathcal{H}(X/G)$ with domain the quotient $X/G=\{[x]\colon x\in X\}$ of $X$
via the action of $G$. We call this the Hjorth graph associated with the $G$%
-space $X$. An (induced) subgraph of $\mathcal{H}(X/G)$ is of the form $%
\mathcal{H}(C/G)$, where $C$ is an invariant subset of $X$. We view Hjorth's
turbulence theorem as a corollary of the following facts:

\begin{enumerate}
\item $\mathcal{H}(X/G)$ contains only loops if $G$ is non-Archimedean;

\item $\mathcal{H}(X/G)$ is a clique if the action of $G$ on $X$ is
preturbulent;

\item given a Polish $G$-space $X$ and a Polish $H$-space $Y$, a Baire
measurable $(E_{G}^{X},E_{H}^{Y})$-homomorphism $f$ induces, after
restricting to an invariant dense $G_{\delta }$ set, a graph homomorphism
between the corresponding Hjorth graphs.
\end{enumerate}

\subsection{The Hjorth-isomorphism game}

We start by defining a game associated with points of a given Polish $G$%
-space, which captures isomorphism in the case of Polish $S_{\infty }$%
-spaces.

\begin{definition}
\label{Definition:Hjorth-game}Suppose that $X$ is a Polish $G$-space, and $%
x,y\in X$. We consider the \emph{Hjorth-isomorphism game} $\mathrm{Iso}(x,y)$
played between two players as follows. Set $x_{0}:=x$, $y_{0}:=y$, $%
U_{0}^{y}:=X$, and $V_{0}^{y}=G$.

\begin{enumerate}
\item In the first turn, Player I plays an open neighborhood $U_{0}^{x}$ of $%
x_{0}$ and an open neighborhood $V_{0}^{x}$ of the identity in $G$. Player
II replies with an element $g_{0}^{y}$ in $G$.

\item In the second turn, Player I then plays an open neighborhood $%
U_{1}^{y} $ of $y_{1}:=g_{0}^{y}y_{0}$ and an open neighborhood $V_{1}^{y}$
of the identity of $G$, and Player II replies with an element $g_{0}^{x}$ in 
$G$.

\item[(2n+1)] At the $(2n+1)$-st turn, Player I plays an open neighborhood $%
U_{n}^{x}$ of $x_{n}:=g_{n-1}^{x}x_{n-1}$ and an open neighborhood $%
V_{n}^{x} $ of the identity of $G$, and Player II responds with an element $%
g_{n}^{y}$ of $G$.

\item[(2n+2)] At the $(2n+2)$-nd turn, Player I plays an open neighborhood $%
U_{n+1}^{y}$ of $y_{n+1}:=g_{n}^{y}y_{n}$ and an open neighborhood $%
V_{n+1}^{y}$ of the identity of $G$, and Player II responds with an element $%
g_{n}^{x}$ of $G$.
\end{enumerate}

The game proceed in this way, producing sequences $\left( x_{n}\right) $ and 
$\left( y_{n}\right) $ of elements of $X$, sequences $\left(
g_{n}^{x}\right) $ and $\left( g_{n}^{y}\right) $ of elements of $G$,
sequences $\left( U_{n}^{x}\right) $ and $\left( U_{n}^{y}\right) $ of open
subsets of $X$, and sequences $\left( V_{n}^{x}\right) $ and $\left(
V_{n}^{y}\right) $ of open neighborhoods of the identity in $G$. Player II
wins the game if, for every $n\geq 0$,

\begin{itemize}
\item $y_{n+1}\in U_{n}^{x}$ and $x_{n}\in U_{n}^{y}$,

\item $g_{n}^{y}=h_{k}\cdots h_{0}$ for some $k\geq 0$ and $h_{0},\ldots
,h_{k}\in V_{n}^{y}$ such that $h_{i}\cdots h_{0}y_{n}\in U_{n}^{y}$ for $%
i\leq k$,

\item $g_{n}^{x}=h_{k}\cdots h_{0}$ for some $k\geq 0$ and $h_{0},\ldots
,h_{k}\in V_{n}^{x}$ such that $h_{i}\cdots h_{0}x_{n}\in U_{n}^{x}$ for $%
i\leq k$.
\end{itemize}

We write $x\sim _{\mathcal{H}}y$ and we say that $x,y$ are Hjorth-isomorphic
if Player II has a winning strategy for the Hjorth game $\mathcal{H}(x,y)$.
\end{definition}

\begin{remark}
\label{Remark:comeager}If Player II has a winning strategy for the Hjorth
game as described above, then it also has a winning strategy for the same
game with the additional winning conditions that $g_{n}^{x}=h_{k}\cdots
h_{0} $ for some $h_{0},\ldots ,h_{k}$ from a given comeager subset of $%
V_{n}^{x}$ such that $h_{i}\cdots h_{0}x_{n}$ belongs to a given comeager
subset $X_{0}$ of $X$ for $i=0,\ldots ,k$, provided that the set of $h\in G$
such that $hx\in X_{0}$ is comeager. Similarly one can add the winning
conditions that $g_{n}^{y}=h_{k}\cdots h_{0}$ for some $h_{0},\ldots ,h_{k}$
from a given comeager subset of $V_{n}^{y}$ such that $h_{i}\cdots
h_{0}y_{n} $ belongs to a given comeager subset $X_{0}$ of $X$, provided
that the set of $h\in G$ such that $hy\in X_{0}$ is comeager. This can be
proved similarly to Lemma \ref{Lemma:comeager-Becker} using the following
version of the Kuratowski-Ulam theorem: suppose that $X,Y$ are Polish spaces
and $f:X\rightarrow Y$ is a continuous open map. Then a Baire-measurable
subset $A $ of $X$ is comeager if and only if the set $\left\{ y\in Y:A\cap
f^{-1}\left\{ y\right\} \text{ is comeager in }f^{-1}\left\{ y\right\}
\right\} $ is comeager; see \cite[Theorem A.1]{melleray_generic_2013}. One
can then apply this fact to the continuous and open map $G\times
X\rightarrow X$, $\left( g,x\right) \mapsto gx$.
\end{remark}

The relation $\sim _{\mathcal{H}}$ is an equivalence relation on $X$ which
we call \emph{Hjorth isomorphism}. It is clear that Hjorth isomorphism is a
coarsening of the orbit equivalence relation $E_{G}$ on $G$. Furthermore if $%
x\sim _{\mathcal{H}}y$, $x^{\prime }$ belongs to the $G$-orbit of $x$, and $%
y^{\prime }$ belongs to the $G$-orbit of $y$, then $x^{\prime }\sim _{%
\mathcal{H}}y^{\prime }$. Let as before $X/G$ be the space of $G$-orbits of
elements of $X$. The Hjorth-graph $\mathcal{H}(X/G)$ associated with the
Polish $G$-space $X$ is symmetric, reflexive graph on $X/G$ given by
declaring that there exists an edge between the orbit $\left[ x\right] $ of $%
x$ and the orbit $\left[ y\right] $ of $y$ if and only if $x\sim _{\mathcal{H%
}}y$. We call $\mathcal{H}\left( X/G\right) $ the Hjorth graph associated
with the Polish $G$-space $X$. One can similarly define the Hjorth graph $%
\mathcal{H}\left( C/G\right) $ for any invariant subset $C$ of $X$. A
comeager subgraph $\mathcal{G}$ of $\mathcal{H}\left( X/G\right) $ is a
graph of the form $\mathcal{H}\left( C/G\right) $, for some invariant
comeager subset $C$ of $X$.

\subsection{Generic homomorphisms between Hjorth graphs}

We now proceed to the proof of the properties of Hjorth graphs stated at the
end of Subsection \ref{Subsection:theory}. In the following, for a subset $V$
of $G$ and $k\in \mathbb{N}$ let $V^{k}$ be the set of elements of $G$ that
can be written as the product of $k$ elements from $V$.

\begin{lemma}
\label{Lemma:nonarchimedean}Suppose that $H$ is a non-Archimedean Polish
group, and $Y$ is a Polish $H$-space. Then the Hjorth graph $\mathcal{H}%
\left( Y/H\right) $ contains only loops.
\end{lemma}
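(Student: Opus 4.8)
The plan is to exploit the fact that a non-Archimedean Polish group $H$ has a neighborhood basis of the identity consisting of \emph{open subgroups}, and to let Player I in the Hjorth-isomorphism game $\mathrm{Iso}(x,y)$ drive the two sequences $(x_n)$ and $(y_n)$ into disjoint open sets when $x$ and $y$ lie in different $H$-orbits. First I would fix points $x,y\in Y$ with $Hx\neq Hy$; since the two orbits are disjoint and $Y$ is $T_1$ (indeed metrizable), I can separate them, but more usefully I will control things round by round. The key observation is this: if $V$ is an open subgroup of $H$ and $z\in Y$, then the set of points reachable from $z$ by a finite product $h_k\cdots h_0$ with each $h_i\in V$ and each partial product $h_i\cdots h_0 z$ staying in a prescribed neighborhood is contained in $Vz$ — and $Vz$ is a single $V$-orbit, hence an $H$-invariant-under-$V$ piece. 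In particular, whatever Player II does at a ``$y$-move,'' the new point $y_{n+1}=g_n^y y_n$ satisfies $y_{n+1}\in V_n^y y_n$; similarly $x_{n+1}\in V_n^x x_n$.

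The main step is then an induction showing Player I can maintain the invariant that $x_n$ and $y_n$ always stay in a fixed pair of disjoint open sets. Concretely: since $Hx\cap Hy=\emptyset$, in particular $y_1=g_0^y y\notin Hx$ is not forced, but what I can do is choose, at Player I's move producing $U_n^x$ (a neighborhood of $x_n$) and $V_n^x$ (an open \emph{subgroup}), the subgroup small enough that $V_n^x x_n$ and the orbit point(s) on the $y$-side are kept apart, and symmetrically for the $U_n^y, V_n^y$ moves. Using that each $V_n^x, V_n^y$ may be taken to be an open subgroup, the reachable set from $x_n$ inside $U_n^x$ via $V_n^x$-products lies in $V_n^x x_n$; if I have arranged $U_n^x$ so small that $U_n^x$ misses $Hy$ entirely — which is possible because... \emph{this is exactly the obstacle}, see below — then the winning condition ``$y_{n+1}\in U_n^x$'' can never be met, because $y_{n+1}\in Hy$. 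Thus Player I wins, so $x\not\sim_{\mathcal H} y$, and the only edges in $\mathcal H(Y/H)$ are loops.

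The hard part will be justifying that Player I can choose $U_n^x$ disjoint from the \emph{entire} orbit $Hy$ (and $U_n^y$ disjoint from $Hx$), since an orbit of a Polish group action need not be closed, so a neighborhood of $x_n\in Hx$ may well meet $\overline{Hy}\supseteq Hy$. The fix is not to separate $x_n$ from all of $Hy$, but to run the argument one step at a time using the subgroup structure: at turn $2n+1$ Player I sees the current $x_n$ and plays an open subgroup $V_n^x$ together with $U_n^x\ni x_n$; since $V_n^x$ is a subgroup, \emph{whatever} legal $g_n^y$ Player II later produces at that same turn must have $g_n^y\in V_n^x$? — no: $g_n^y$ is constrained by $V_n^y$ from a \emph{previous} $y$-turn. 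So the correct bookkeeping is: the move $g_n^y = h_k\cdots h_0$ with $h_i\in V_n^y$ forces $y_{n+1}\in V_n^y y_n$ where $V_n^y$ was played by Player I at turn $2n+2$ \emph{as an open subgroup}; hence $[y_{n+1}]_{H}=[y_n]_H$ and more precisely $y_{n+1}$ and $y_n$ lie in the same coset-orbit $V_n^y y_n$. Iterating, all $y_n$ lie in a decreasing (as the subgroups shrink along a basis) family, and one shows $\bigcap_n V_n^y y_n$-type control forces any limit behaviour to stay inside $Hy$, so $y_{n+1}$ can be kept out of a suitably chosen $U_n^x$ because $x_n\notin Hy$ while $y_{n+1}\in Hy$, and $Hy$, though possibly non-closed, still does not \emph{contain} $x_n$; choosing $U_n^x$ to be any neighborhood of $x_n$ that omits the single point — or rather, arranging via continuity of the action and the subgroup basis that the finite stock of relevant orbit points $y_0,\dots,y_n$ stays bounded away — suffices. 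I expect the clean way to package this, mirroring Lemma \ref{lemma CLI totally Becker disco}, is: Player I fixes at the outset a decreasing basis $(V_n)$ of open subgroups; then $g_n^y\in V_n^y$-products keep $y_{n+1}\in V_n y_n\subseteq V_0 y$, and simultaneously $x_{n+1}\in V_n x_n\subseteq V_0 x$; choosing $V_0$ a small enough open subgroup that $V_0 x\cap V_0 y=\emptyset$ (possible since $x\notin Hy\supseteq$... — here one uses that $Hx\neq Hy$ together with: an open subgroup $V_0$ with $V_0 x\cap V_0 y=\emptyset$ exists iff $x\notin \overline{V_0 y}$ for some $V_0$, which does hold because $\{V y: V$ open subgroup$\}$ has intersection $\bigcap_V Vy$ contained in $Hy$ and one separates $x$ from the \emph{closed} set $\overline{Hy}\setminus$ ... ) — the genuine subtlety, which the authors presumably handle by a short direct argument I will model on the CLI case, is exactly this separation, and everything else is routine verification that the winning condition for Player II then fails.
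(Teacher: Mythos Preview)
Your proposal has a genuine gap, and it stems from taking the wrong direction after correctly identifying the key ingredient.

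You correctly observe that when $V_n^x,V_n^y$ are open \emph{subgroups}, any product $h_k\cdots h_0$ with $h_i\in V_n^x$ lies in $V_n^x$ itself, so $g_n^x\in V_n^x$ and $g_n^y\in V_n^y$ regardless of $k$. This is exactly the leverage non-Archimedeanity provides. But you then try to use this to \emph{separate}: to find an open subgroup $V_0$ with $V_0x\cap V_0y=\emptyset$ and keep the two trajectories apart. The obstacle you yourself flag is real and not resolved: orbits (even $V$-orbits) need not be closed, and for instance if $Hy$ is dense there is no open neighborhood of $x_n$ disjoint from $Hy$. Moreover, $V_0^y=G$ by the rules of the game, so $y_1=g_0^y y$ is an \emph{arbitrary} point of $Hy$; your first separation cannot constrain it at all. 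Your sketch never closes this gap---you end by saying the authors ``presumably handle'' it.

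They don't handle it; they sidestep it entirely. The paper argues the contrapositive directly: assume Player~II has a winning strategy, and let Player~I play a specific run. Player~I chooses $U_n^x,U_n^y$ with $\mathrm{cl}(U_{n+1}^y)\subset U_n^x$, $\mathrm{cl}(U_n^x)\subset U_n^y$, and diameters $\leq 2^{-n}$, which forces $(x_n)$ and $(y_n)$ to converge to a \emph{common} point $z$. Simultaneously Player~I chooses open subgroups $V_n^x,V_n^y$ small enough that $d_G(g\,g_{n-1}^x\cdots g_0^x,\,g_{n-1}^x\cdots g_0^x)<2^{-n}$ for $g\in V_n^x$ (and similarly for $y$), where $d_G$ is a compatible complete metric on $H$. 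Since $g_n^x\in V_n^x$, the sequence $(g_n^x\cdots g_0^x)$ is $d_G$-Cauchy, hence converges to some $g_\infty^x$; likewise $g_\infty^y$ exists. Continuity gives $g_\infty^x x=z=g_\infty^y y$, so $x$ and $y$ lie in the same orbit.

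Note that this is precisely the shape of Lemma~\ref{lemma CLI totally Becker disco}, which you invoke as your model: there too the argument is ``assume Player~II meets the winning condition, extract a Cauchy sequence of group elements, use completeness to get convergence, contradict distinct orbits''---not a separation argument. Had you actually mirrored that lemma, you would have arrived at the paper's proof.
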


\begin{proof}
Suppose that $G\ $is a non-Archimedean Polish group. Fix a compatible
complete metric $d$ on $X$, and a compatible complete metric $d_{G}$ on $G$.
We denote by $\mathrm{diam}(A)$ the diameter of a subset $A$ of $X$ with
respect to the metric $d$, and by $\mathrm{cl}(A)$ the closure of $A$.
Suppose that Player II has a winning strategy for the Hjorth-isomorphism
game $\mathrm{Iso}\left( x,y\right) $. We want to show that $x$ and $y$
belong to the same orbit. This can be seen by letting Player I play open
subsets $U_{n}^{x}$ and $U_{n}^{y}$ of $X$ such that $\mathrm{cl}%
(U_{n+1}^{y})\subset U_{n}^{x}$, $\mathrm{cl}(U_{n}^{x})\subset U_{n}^{y}$, $%
\mathrm{diam}(U_{n}^{x})\leq 2^{-n}$, $\mathrm{diam}\left(
U_{n+1}^{y}\right) \leq 2^{-n}$, and open subgroups $V_{n}^{x}$ and $%
V_{n}^{y}$ of $G$ such that 
\begin{eqnarray*}
V_{n}^{x} &\subset &\left\{ g\in G:d_{G}\left( gg_{n-1}^{x}\cdots
g_{0}^{x},g_{n-1}^{x}\cdots g_{0}^{x}\right) <2^{-n}\right\} \\
V_{n}^{y} &\subset &\left\{ g\in G:d_{G}\left( gg_{n-1}^{y}\cdots
g_{0}^{y},g_{n-1}^{y}\cdots g_{0}^{y}\right) <2^{-n}\right\} \text{.}
\end{eqnarray*}%
Let then $\left( x_{n}\right) $ and $\left( y_{n}\right) $ be the sequences
of elements of $X$ and $\left( g_{n}^{x}\right) $ and $\left(
g_{n}^{y}\right) $ be the sequences of elements of $G$ obtained from the
corresponding round of the Hjorth game. Then the assumptions on $U_{n}^{x}$
and $U_{n}^{y}$ guarantee that the sequences $\left( x_{n}\right) $ and $%
\left( y_{n}\right) $ converge to the same point $z$ of $X$. The assumptions
on $V_{n}^{x}$ and $V_{n}^{y}$ guarantee that the sequences $\left(
g_{n}^{x}g_{n-1}^{x}\cdots g_{0}^{x}\right) _{n\in \omega }$ and $\left(
g_{n}^{y}g_{n-1}^{y}\cdots g_{0}^{y}\right) _{n\in \omega }$ converge in $H$
to elements $g_{\infty }^{x}$ and $g_{\infty }^{y}$ such that $g_{\infty
}^{x}x=z$ and $g_{\infty }^{y}y=z$. This shows that $x$ and $y$ belong to
the same orbit.
\end{proof}

\begin{lemma}
\label{Lemma:turbulent}Suppose that $X$ is a preturbulent Polish $G$-space.
Then the Hjorth graph $\mathcal{H}\left(X/G\right) $ is a clique.
\end{lemma}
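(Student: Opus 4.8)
The plan is to show that for any two points $x,y$ in a preturbulent Polish $G$-space $X$ with dense orbits, Player II has a winning strategy in the Hjorth-isomorphism game $\mathrm{Iso}(x,y)$, so that $x\sim_{\mathcal H}y$ and $\mathcal H(X/G)$ is a clique. The key input is that preturbulence gives us control over local orbits: whenever Player I plays a neighborhood $U_n^x$ of the current point $x_n$ and a neighborhood $V_n^x$ of the identity, the set $\mathrm{cl}(\mathcal O(x_n,U_n^x,V_n^x))$ is a neighborhood of $x_n$, hence has nonempty interior. Player II's response $g_n^y$ will be chosen so that $g_n^y y_n$ lands inside this local-orbit closure, and the winning condition on $g_n^y$ being a product of elements of $V_n^y$ that keep $y_n$ inside $U_n^y$ will be met by exploiting that the current point $y_n$ also has dense orbit and that local orbits of $x_n$ are approximable by local-orbit moves.

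More precisely, I would argue by induction, maintaining the invariant that the ``current'' points $x_n$ and $y_n$ are getting close together in a complete compatible metric, specifically that Player I can be forced (by playing a decreasing basis of neighborhoods with diameters $\to 0$) to make $(x_n)$ and $(y_n)$ interleave and converge to a common limit. The crucial step is the odd turn: Player I has just played $U_n^x\ni x_n$ and $V_n^x$; since $x$ (hence $x_n$, which lies in the orbit of $x$) is turbulent, $\mathrm{cl}(\mathcal O(x_n,U_n^x,V_n^x))$ is a neighborhood of $x_n$, so it contains some open set $W$. Because $y_n$ has dense orbit, there is $g\in G$ with $gy_n\in W$; by definition of the closure of the local orbit we can then find a point $z\in\mathcal O(x_n,U_n^x,V_n^x)$ arbitrarily close to $gy_n$, and by definition of the local orbit, $z=h_k\cdots h_0 x_n$ for some $h_i\in V_n^x$ with all partial products staying in $U_n^x$. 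The trouble is that we need such a product moving $y_n$, not $x_n$. This is where I would use the \emph{symmetry} and openness built into the setup: run the ``zig-zag'' so that at the odd turn Player II in fact picks $g_n^y$ realizing a local-orbit move of $y_n$ that lands $y_{n+1}=g_n^y y_n$ inside $U_n^x$ and inside $\mathrm{cl}(\mathcal O(x_n,U_n^x,V_n^x))$; preturbulence applied now to $y_n$ (also turbulent!) guarantees local orbits of $y_n$ through small $V$'s fill a neighborhood, and a back-and-forth chaining of the two local-orbit-closure conditions lets one slot $y_{n+1}$ into the required position. The even turn is handled symmetrically, exchanging the roles of $x$ and $y$.

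The main obstacle, as in Hjorth's original argument, is organizing this back-and-forth bookkeeping so that the two interleaved requirements---(i) the new point lands in the neighborhood demanded by the \emph{other} side, and (ii) the group element witnessing this is a genuine local-orbit product through the \emph{currently specified} small neighborhood $V_n^y$ or $V_n^x$---can be met simultaneously at every stage. The resolution is precisely that both $x_n$ and $y_n$ are turbulent (preturbulence is a statement about \emph{every} point, and turbulence is invariant under the $G$-action), so at each turn we may apply the local-orbit-closure property to whichever point is being moved; combining this with density of the opposite orbit and Remark \ref{Remark:comeager} (to stay inside any prescribed comeager sets, which we will not even need here), the construction goes through. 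I would close by noting that since $X$ is preturbulent every point has dense orbit, so the hypothesis that $x,y$ have dense orbits is automatic, and hence $x\sim_{\mathcal H}y$ for \emph{all} $x,y\in X$, i.e.\ $\mathcal H(X/G)$ is a clique.
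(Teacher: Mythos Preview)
Your outline has the right overall shape---a back-and-forth in which Player~II targets the (interior of the) closure of the local orbit of the opposite point---but there is a genuine gap at the inductive step, and one paragraph is simply confused.

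First, the paragraph about ``maintaining the invariant that $x_n$ and $y_n$ are getting close in a complete metric'' and ``Player~I can be forced to play a decreasing basis of neighborhoods'' does not belong here. Player~II controls nothing about Player~I's moves, and the winning conditions of $\mathrm{Iso}(x,y)$ say nothing about convergence; they only require $y_{n+1}\in U_n^x$, $x_n\in U_n^y$, and that each $g_n^x,g_n^y$ be a genuine local-orbit product. You appear to be recalling the argument for Lemma~\ref{Lemma:nonarchimedean}, where Player~I (not~II) plays such a shrinking sequence in order to \emph{win}; that is the opposite lemma.

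Second, and more importantly, you correctly state the target---$y_{n+1}$ should lie in the closure (really: interior of the closure) of $\mathcal{O}(x_n,U_n^x,V_n^x)$---but you do not explain why a local-orbit move of $y_n$ through $(U_n^y,V_n^y)$ can reach such a point. Invoking ``preturbulence applied to $y_n$'' only tells you that $\mathcal{O}(y_n,U_n^y,V_n^y)$ is dense in a neighborhood of $y_n$; it says nothing about reaching a neighborhood of $x_n$. The missing ingredient is a precise inductive invariant: one must arrange at the previous even turn that $x_n$ lies in $\mathcal{I}(y_n,U_n^y,V_n^y)$, the \emph{interior} of $\mathrm{cl}\,\mathcal{O}(y_n,U_n^y,V_n^y)$. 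This, together with the observation that for every $w\in\mathcal{I}(z,U,V)$ the local orbit $\mathcal{O}(w,\mathcal{I}(z,U,V),V)$ is dense in $\mathcal{I}(z,U,V)$, is exactly what lets Player~II push $y_n$ by a $(U_n^y,V_n^y)$-local-orbit move into the open set $\mathcal{I}(x_n,U_n^x,V_n^x)\cap\mathcal{I}(y_n,U_n^y,V_n^y)$, thereby both satisfying the winning condition and setting up the symmetric invariant for the next turn. Your phrase ``back-and-forth chaining of the two local-orbit-closure conditions'' gestures at this, but without stating the invariant and the density observation you have not actually closed the loop. This is precisely the ``preliminary observation'' that the paper isolates before describing Player~II's strategy.
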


\begin{proof}
Suppose that $X$ is a preturbulent Polish $G$-space. Fix $x,y\in X$. We want
to prove that Player II has a winning strategy for the Hjorth game $\mathcal{%
H}\left( x,y\right) $. We begin with a preliminary observation. Suppose that 
$z\in X$, $U$ is an open neighborhood of $z$, and $V$ is an open
neighborhood of the identity in $G$. Let $\mathcal{I}(z,U,V)$ be the
interior of the closure of the local orbit $\mathcal{O}(z,U,V)$. Since $z$
is turbulent, $\mathcal{I}(z,U,V)$ contains $z$. It is not difficult to see
that, for any $w\in \mathcal{I}(z,U,V)$, the local orbit $\mathcal{O}(w,%
\mathcal{I}(z,U,V),V)$ is dense in $\mathcal{I}(z,U,V)$. We use this
observation to conclude that Player II has a winning strategy, which we
proceed to define. As in the definition of the Hjorth game, we let $x_{0}=x$%
, $y_{0}=y$, $U_{0}^{y}=X$, and $V_{0}^{y}=G$. At the $\left( 2n+1\right) $%
-st turn Player II plays an element $g_{n}^{y}=h_{k}\cdots h_{0}\in
(V_{n}^{y})^{k}$ for some $k\geq 1$ such that $y_{n+1}=g_{n}^{y}y_{n}\in 
\mathcal{I}(x_{n},U_{n}^{x},V_{n}^{x})$ and $h_{i}\cdots h_{0}y_{n}\in
U_{n}^{y}$ for $i\leq k$, while at the $\left( 2n+2\right) $-nd turn Player
II plays an element $g_{n}^{x}=h_{k}\cdots h_{0}\in \left( V_{n}^{x}\right)
^{k}$ for some $k\geq 1$ such that $x_{n+1}=g_{n}^{x}x_{n}\in \mathcal{I}%
\left( y_{n+1},U_{n+1}^{y},V_{n+1}^{y}\right) $ and $h_{i}\cdots
h_{0}x_{n}\in U_{n}^{x}$ for $i\leq k$. Such a choice is possible at the $1$%
-st turn since $y$ has dense orbit. It is possible at the $\left(
2n+2\right) $-nd turn ($n\geq 0$) since $y_{n+1}\in \mathcal{I}\left(
x_{n},U_{n}^{x},V_{n}^{x}\right) $ and for every $w\in \mathcal{I}\left(
x_{n},U_{n}^{x},V_{n}^{x}\right) $ the local orbit $\mathcal{O}\left( w,%
\mathcal{I}\left( x_{n},U_{n}^{x},V_{n}^{x}\right) ,V_{n}^{x}\right) $ is
dense in $\mathcal{I}\left( x_{n},U_{n}^{x},V_{n}^{x}\right) $. It is
possible at the $\left( 2n+1\right) $-st turn ($n\geq 1$) since $x_{n}\in 
\mathcal{I}\left( y_{n},U_{n}^{y},V_{n}^{y}\right) $ and for any $w\in 
\mathcal{I}\left( y_{n},U_{n}^{y},V_{n}^{y}\right) $ the local orbit $%
\mathcal{O}\left( w,\mathcal{I}\left( y_{n},U_{n}^{y},V_{n}^{y}\right)
,V_{n}^{y}\right) $ is dense in $\mathcal{I}\left(
y_{n},U_{n}^{y},V_{n}^{y}\right) $. This concludes the proof that Player II
has a winning strategy for the Hjorth game $\mathcal{H}\left( x,y\right) $.
\end{proof}

\begin{proposition}
\label{Proposition:graph-homomorphism}Suppose that $G,H$ are Polish groups, $%
X$ is a Polish $G$-space, and $Y$ is a Polish $H$-space. If $f$ is a
Baire-measurable $\left( E_{G}^{X},E_{H}^{Y}\right) $-homomorphism, then
there exists a $G$-invariant dense $G_{\delta }$ subset $X_{0}$ of $X$ such
that the function $X_{0}/G\rightarrow Y/H$, $\left[ x\right] \mapsto \left[
f(x)\right] $ is a homomorphism from the Hjorth graph $\mathcal{H}\left(
X_{0}/G\right) $ to the Hjorth graph $\mathcal{H}\left( Y/H\right) $.
\end{proposition}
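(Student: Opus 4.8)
The plan is to mirror the structure of the proof of Proposition \ref{Proposition:digraph-homomorphism}, replacing the Becker-embedding game by the Hjorth-isomorphism game and the one-sided ``shifting'' argument by a symmetric back-and-forth argument. First I would invoke Lemma \ref{Lemma:orbit-continuity} applied to $f$ to obtain a dense $G_\delta$ set $C\subseteq X$ on which $f$ is continuous, such that $\{g\in G: gx\in C\}$ is comeager for each $x\in C$, and such that the key local continuity-at-the-level-of-orbits property holds: for every $x_0\in C$ and every neighborhood $W$ of the identity in $H$ there are neighborhoods $U\ni x_0$ in $X$ and $V\ni 1$ in $G$ with $f(gx)\in Wf(x)$ and $gx\in C$ for all $x\in U\cap C$ and comeagerly many $g\in V$. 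I would then set $X_0:=\{x\in X:\forall^{\ast}g\in G,\ gx\in C\}$, which is $G$-invariant dense $G_\delta$ by \cite[Proposition 3.2.5 and Theorem 3.2.7]{gao_invariant_2009}.

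Next I would fix $x_0,y_0\in X_0$ with $x_0\sim_{\mathcal H}y_0$ and aim to show $f(x_0)\sim_{\mathcal H}f(y_0)$. After replacing $x_0,y_0$ by suitable translates, I may assume $x_0,y_0\in C\cap X_0$. The winning strategy for Player II in $\mathrm{Iso}(f(x_0),f(y_0))$ is built by simulating a run of $\mathrm{Iso}(x_0,y_0)$, in which Player II follows her given winning strategy. The correspondence is: whenever, in the simulated $H$-game, Player I gives a neighborhood $\widehat U$ of the current point $\widehat x_n=f(x_n)$ (resp.\ $\widehat y_n=f(y_n)$) and a neighborhood $\widehat V\ni 1$ in $H$, I use the orbit-continuity property of Lemma \ref{Lemma:orbit-continuity} to produce a neighborhood $U\ni x_n$ in $X$ and $V\ni 1$ in $G$ so that $f$ maps $V$-perturbations of points near $x_n$ (inside $C\cap X_0$) to $\widehat V$-perturbations of their images; I feed $U,V$ to Player I in the $G$-game $\mathrm{Iso}(x_0,y_0)$, let Player II's strategy produce $g_n^x=h_k\cdots h_0\in(V)^k$ with all partial products keeping $x_n$ inside $U$, and then translate this chain of $G$-moves into a chain of $H$-moves $\widehat h_k,\dots,\widehat h_0\in\widehat V$ with $\widehat h_i\cdots\widehat h_0 f(x_n)=f(h_i\cdots h_0 x_n)$ for each $i$, using that each $f(h_i\cdots h_0 x_n)\in\widehat V f(h_{i-1}\cdots h_0 x_n)$. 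The crucial point is that the composite $g_n^x$ is itself a product of small elements of $V$ with all partial orbit points in $U\cap C\cap X_0$ — exactly the form required by the winning condition in Definition \ref{Definition:Hjorth-game} — and so its image decomposes correspondingly in $H$. I would use Remark \ref{Remark:comeager} to insist that all the relevant partial products $h_i\cdots h_0 x_n$, and similarly the $y$-side points, land in the comeager set of elements of $C\cap X_0$ where the orbit-continuity estimate applies, and where $f(gx)\in\widehat V f(x)$ holds; the hypothesis of Remark \ref{Remark:comeager} is met because $\{g\in G: gx\in C\cap X_0\}$ is comeager for $x\in C\cap X_0$. Finally, the conditions $y_{n+1}\in U_n^x$ and $x_n\in U_n^y$ from the simulated run, together with continuity of $f$ on $C$, translate into $\widehat y_{n+1}\in\widehat U_n^x$ and $\widehat x_n\in\widehat U_n^y$ in the $H$-game — provided Player I's neighborhoods in the $G$-game were chosen small enough that $f$ of them lies inside Player I's prescribed $H$-neighborhoods, which is again arranged by continuity of $f|_C$.

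The main obstacle, and the place demanding care, is the bookkeeping in the back-and-forth: unlike the Becker game, the Hjorth game alternates between the $x$-side and the $y$-side, so at odd turns one is perturbing near $x_n$ and at even turns near $y_{n+1}=g_n^y y_n$, and each newly produced point must be verified to lie in $C\cap X_0$ so that the orbit-continuity lemma keeps applying. One must track that at stage $n$ the points $x_n$ and $y_n$ are genuinely of the form $g^x_{n-1}\cdots g^x_0 x_0$ and $g^y_{n-1}\cdots g^y_0 y_0$ with all intermediate orbit points inside the previously played neighborhoods, and simultaneously that $f(x_n),f(y_n)$ are reached from $f(x_0),f(y_0)$ by chains of small $H$-elements, so that the winning condition for $\mathrm{Iso}(f(x_0),f(y_0))$ is met term by term; this is the analogue of the inductive invariant carried through the proof of Lemma \ref{Lemma:comeager-Becker}. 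Everything else is a routine combination of Lemma \ref{Lemma:orbit-continuity}, Remark \ref{Remark:comeager}, continuity of $f|_C$, and the assumed winning strategy for Player II in $\mathrm{Iso}(x_0,y_0)$.
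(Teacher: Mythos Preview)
Your proposal is correct and follows essentially the same approach as the paper: apply Lemma \ref{Lemma:orbit-continuity} to get $C$, set $X_0=\{x:\forall^{\ast}g,\ gx\in C\}$, pass to representatives in $C\cap X_0$, and then simulate a run of $\mathrm{Iso}(x_0,y_0)$ to produce a winning strategy in $\mathrm{Iso}(f(x_0),f(y_0))$, invoking Remark \ref{Remark:comeager} to keep all intermediate orbit points in $C\cap X_0$. The paper's own proof is terse and simply refers back to the argument for Proposition \ref{Proposition:digraph-homomorphism}; the only cosmetic difference is that the paper replaces just $x_0$ by a translate (the initial freedom $U_0^y=X$, $V_0^y=G$ in the game handles $y_0$), whereas you replace both, which is harmless.
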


\begin{proof}
We proceed as in the proof of Proposition \ref%
{Proposition:digraph-homomorphism}. Let $C$ be dense $G_{\delta }$ subsets
of $X$ obtained from $f$ as in Lemma \ref{Lemma:orbit-continuity}. Set $%
X_{0}:=\left\{ x\in X:\forall ^{\ast }g\in G,gx\in C\right\} $, which is a $%
G $-invariant dense $G_{\delta }$ set by \cite[Proposition 3.2.5 and Theorem
3.2.7]{gao_invariant_2009}. We claim that $X_{0}/G\rightarrow Y/H$, $\left[ x%
\right] \mapsto \left[ f(x)\right] $ is a graph homomorphism from the Hjorth
graph $\mathcal{H}\left( X_{0}/G\right) $ to the Hjorth graph $\mathcal{H}%
\left( Y/H\right) $.

Fix $x_{0},y_{0}\in X_{0}$ such that $x_{0}\sim _{\mathcal{H}}y_{0}$. We
want to prove that $f(x_{0})\sim _{\mathcal{H}}f(y_{0})$. Observe that $%
\forall ^{\ast }g\in G$, $gx_{0}\in C\cap X_{0}$. Therefore after replacing $%
x_{0}$ with $gx_{0}$ for a suitable $g\in G$ we can assume that $x_{0}\in
C\cap X_{0}$. In this case one can define, similarly as in the proof of
Proposition \ref{Proposition:digraph-homomorphism}, a winning strategy for
Player II for $\mathrm{Iso}(f(x_{0}),f(y_{0}))$ from a winning strategy for
Player II for $\mathrm{Iso}(x_{0},y_{0})$ using Remark \ref{Remark:comeager}
and the choice of $C$.
\end{proof}

It is now easy to see that Theorem \ref{Theorem:turbulence} is an immediate
consequence of Lemma \ref{Lemma:nonarchimedean} and Lemma \ref%
{Lemma:turbulent} together with Proposition \ref%
{Proposition:graph-homomorphism}.

\section{Groupoids\label{Section:groupoids}}

\subsection{Polish groupoids\label{Sbs:groupoids}}

The goal of this section is to observe that the proofs above apply equally
well in the setting of Polish groupoids as introduced in \cite%
{ramsay_polish_2000,ramsay_mackey-glimm_1990,lupini_polish_2014}. A \emph{%
groupoid} $G$ is a small category where every morphism (also called arrow)
is invertible. By identifying any object with the corresponding identity
arrow, one can regard the set $G^{0}$ of objects of $G$ as a subset of $G$.
The source and range maps $s,r:G\rightarrow G^{0}$ assign to every arrow in $%
G$ its domain\ (or source) and codomain\ (or range). The set $G^{2}$ of
composable arrows is the set of pairs $(\gamma ,\rho )$ of arrows from $G$
such that $s(\gamma )=r\left( \rho \right) $. Composition of arrows is a
function $G^{2}\rightarrow G$, $(\gamma ,\rho )\rightarrow \gamma \rho $.\
If $A,B\subset G$, then we denote by $AB$ the set $\left\{ \gamma \rho
:(\gamma ,\rho )\in G^{2}\cap \left( A\times B\right) \right\} $. If $x\in
G^{0}$ and $A\subset G$, then we let $Ax:=A\left\{ x\right\} =\left\{ \gamma
\in G:s(\gamma )=x\right\} $ and $xA:=\left\{ x\right\} A=\left\{ \gamma \in
G:r(\gamma )=x\right\} $.

A \emph{Polish groupoid} is a groupoid $G$ endowed with a topology such that

\renewcommand{\labelenumi}{(\arabic{enumi})}

\begin{enumerate}
\item there exists a countable basis $\mathcal{B}$ of Polish open sets,

\item composition and inversion of arrows are continuous and open,

\item the sets $Gx$ and $xG$ are Polish subspaces for every $x\in G^{0}$, and

\item the set of objects $G^{0}$ is a Polish subspace.
\end{enumerate}

A Polish groupoid\emph{\ }is not required to be globally Hausdorff. Many
Polish groupoids arising in the applications, such as the locally compact
groupoids associated with foliations of manifolds, are not Hausdorff; see 
\cite[Chapter 2]{paterson_groupoids_1999}.

Suppose that $H$ is a Polish group. One can associate with any Polish $H$%
-space $X$ a Polish groupoid $H\ltimes X$---the \emph{action\ groupoid}%
---that completely encodes the action. Such a groupoid has the Cartesian
product $H\times X$ as set of arrows (endowed with the product topology),
and $\left\{ \left( 1_{H},x\right) :x\in X\right\} $ as set of objects.
Source and range maps are defined by $s\left( h,x\right) =\left(
1_{H},x\right) $ and $r\left( h,x\right) =\left( 1_{H},hx\right) $.
Composition is given by $\left( h,x\right) \left( h^{\prime },y\right)
=\left( hh^{\prime },y\right) $ whenever $x=h^{\prime }y$. In this way one
can regard continuous actions of Polish groups on Polish spaces as a
particular instance of Polish groupoids. One can also consider continuous
actions of Polish groupoids on Polish spaces, but these can be in turn
regarded as Polish groupoids via a similar construction as the one described
above. The class of Polish groupoids is also closed under taking
restrictions. If $X$ is a $G_{\delta }$ subset of the set of objects of a
Polish groupoid $G$, then the \emph{restriction} $G|_{X}$ is the collection
of arrows of $G$ with source and range in $X$, endowed with the induced
Polish groupoid structure. More information about Polish groupoids can be
found in \cite{lupini_polish_2014}.

Given a Polish groupoid $G$, the orbit equivalence relation $E_{G}$ is the
equivalence relation on $G^{0}$ defined by setting $xE_{G}y$ if and only if $%
x,y$ are source and range of an arrow from $G$. The orbit of an object in $G$
is the $E_{G}$-class of $x$.

\subsection{Turbulence for Polish groupoids\label{Sbs:groupoid-turbulence}}

The notion of (pre)turbulence for Polish groupoid has been considered in 
\cite[Section 4]{hartz_classification_2015}. Suppose that $G$ is a Polish
groupoid, $x$ is an object of $G$, and $U$ is a neighborhood of $x$ in $G$.
The local orbit $\mathcal{O}(x,U)$ is the smallest subset of $U\cap G^{0}$
with the property that $x\in \mathcal{O}(x,U)$, and if $\gamma \in U$ is
such that $s(\gamma )\in \mathcal{O}(x,U)$, then $r(\gamma )\in \mathcal{O}%
(x,U)$. An object $x$ is called turbulent if it has orbit dense in $G^{0}$
and, for any neighborhood $U$ of $x$, the closure of $\mathcal{O}(x,U)$ is a
neighborhood of $x$ in $G^{0}$. A Polish groupoid is preturbulent if every
object is turbulent, and turbulent if every object is turbulent and has
orbit meager in $G^{0}$. It is not difficult to see that these definitions
are consistent with the ones for Polish group actions, when a Polish group
action is identified with its associated action groupoid.

Suppose that $G$ is a Polish groupoid, and $x,y\in G^{0}$ are two objects of 
$G$. The Hjorth-isomorphism game $\mathrm{Iso}(x,y)$ can be defined
similarly as in Definition \ref{Definition:Hjorth-game}. Set $x_{0}:=x$, $%
y_{0}:=y$, $U_{0}^{y}=G$, and $V_{0}^{y}=G$. In this case, in the first turn
Player I plays an open neighborhood $U_{0}^{x}$ of $x_{0}$ in $G$ and Player
II replies with an element $\gamma _{0}^{y}$ of $G$ with $s\left( \gamma
_{0}^{y}\right) =y_{0}$. In the second turn, Player I plays an open
neighborhood $U_{1}^{y}$ of $y_{1}:=r\left( \gamma _{0}^{y}\right) $ in $G$
and an element $\gamma _{0}^{x}$ of $G$ with $s\left( \gamma _{0}^{x}\right)
=x_{0}$. At the $\left( 2n+1\right) $-st turn, Player I plays an open
neighborhood $U_{n}^{x}$ of $x_{n}:=r\left( \gamma _{n-1}^{x}\right) $ in $G$%
, and Player II responds with an element $\gamma _{n}^{y}$ of $G$ with $%
s\left( \gamma _{n}^{y}\right) =y_{n}$. At the $\left( 2n+2\right) $-nd
turn, Player I plays an open neighborhood $U_{n+1}^{y}$ of $y_{n+1}:=r\left(
\gamma _{n}^{y}\right) $ in $G$, and Player II responds with an element $%
\gamma _{n}^{x}$ of $G$.

The game then produces sequences $\left( x_{n}\right) ,\left( y_{n}\right) $
of objects of $G$, sequences $\left( \gamma _{n}^{x}\right) ,\left( \gamma
_{n}^{y}\right) $ of arrows in $G$, and sequences $\left( U_{n}^{x}\right)
,\left( U_{n}^{y}\right) $ of open subsets of $G$. Player II wins the game
if, for every $n\geq 0$,

\begin{itemize}
\item $y_{n+1}\in U_{n}^{x}$ and $x_{n}\in U_{n}^{y}$,

\item $\gamma _{n}^{y}=\rho _{1}^{y}\rho _{2}^{y}\cdots \rho _{k}^{y}$ for
some $k\geq 1$ and $\rho _{i}^{y}\in V_{n}^{x}$ for $i=1,2,\ldots ,k$, and $%
\gamma _{n}^{x}=\rho _{1}^{x}\cdots \rho _{k}^{x}$ for some $k\geq 1$ and $%
\rho _{i}^{x}\in V_{n}^{y}$ for $i=1,2,\ldots ,k$.
\end{itemize}

As in the case of Polish group actions, this defines an equivalence relation 
$\sim _{\mathcal{H}}$ (Hjorth-isomorphism) on the set of objects of $G$, by
letting $x\sim _{\mathcal{H}}y$ whenever Player II has a winning strategy
for the Hjorth-isomorphism game $\mathrm{Iso}(x,y)$. Adding to the winning
conditions in the Hjorth-isomorphism game the requirement that $r(\gamma
_{n}^{x})$ belongs to a given comeager subset $X$ of $G^{0}$ and that $%
\gamma _{n}^{x}$ belongs to a given comeager subset of $Gx_{n}$ yields an
equivalent game, provided that the set of $\gamma \in Gx$ such that $%
r(\gamma )\in X$ is comeager. The same applies to $y$. The
Hjorth-isomorphism relation on $G^{0}$ defines a graph structure $\mathcal{H}%
\left( G\right) $ on the space of $G$-orbits, which we call the Hjorth graph
of $G$. The same proof as Lemma \ref{Lemma:turbulent} shows that if $G$ is a
preturbulent Polish groupoid, then the Hjorth graph $\mathcal{H}\left(
G\right) $ is a clique. The analogue of Lemma \ref{Lemma:orbit-continuity}
for Polish groupoids has been proved in \cite[Lemma 4.5]%
{hartz_classification_2015}. Using this one can then prove the analog of
Proposition \ref{Proposition:graph-homomorphism} and deduce the following
result.

\begin{theorem}
\label{Theorem:groupoid-turbulence}Suppose that $G$ is a preturbulent Polish
groupoid. Then the associated orbit equivalence relation $E_{G}$ is
generically $S_{\infty }$-ergodic.
\end{theorem}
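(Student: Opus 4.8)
The plan is to deduce Theorem~\ref{Theorem:groupoid-turbulence} exactly as its Polish-group counterpart (Theorem~\ref{Theorem:turbulence}) was deduced, by assembling three facts about the Hjorth graph $\mathcal{H}(G)$ of a Polish groupoid. First, I would observe that since $E_G$ is, by definition, generically $S_\infty$-ergodic precisely when every Baire-measurable $(E_G,E_H^Y)$-homomorphism $f$ into a Polish $H$-space with $H$ non-Archimedean sends a comeager subset of $G^0$ into a single $H$-orbit (using \cite[Theorem 3.5.2, Theorem 11.3.8]{gao_invariant_2009} as in Subsection~\ref{Subsection:theory}), it suffices to prove: (i) for $H$ non-Archimedean, the Hjorth graph $\mathcal{H}(Y/H)$ contains only loops; (ii) if $G$ is preturbulent, $\mathcal{H}(G)$ is a clique; and (iii) a Baire-measurable homomorphism $f$ induces, after restriction to an invariant dense $G_\delta$ subset of $G^0$, a graph homomorphism from $\mathcal{H}(G)$ to $\mathcal{H}(Y/H)$.

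For (i), nothing changes: the target is an action groupoid $H\ltimes Y$ with $H$ non-Archimedean, so Lemma~\ref{Lemma:nonarchimedean} already gives that $\mathcal{H}(Y/H)$ contains only loops. For (ii), as the excerpt already notes, the proof of Lemma~\ref{Lemma:turbulent} transfers verbatim to the groupoid Hjorth-isomorphism game: one works with the local orbits $\mathcal{O}(x,U)$ and their interiors-of-closures $\mathcal{I}(x,U)$, uses that turbulence of an object $x$ makes $\mathcal{I}(x,U)$ a neighborhood of $x$, and uses the density of $\mathcal{O}(w,\mathcal{I}(x,U))$ in $\mathcal{I}(x,U)$ for every $w\in\mathcal{I}(x,U)$ to let Player~II steer the sequences $(x_n),(y_n)$ toward each other round by round, picking each $\gamma_n^x,\gamma_n^y$ as a composition of arrows from the prescribed neighborhoods whose intermediate range-points stay inside the required open sets. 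So the substantive content is (iii).

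For (iii) I would invoke the groupoid orbit continuity lemma \cite[Lemma 4.5]{hartz_classification_2015} in place of Lemma~\ref{Lemma:orbit-continuity}: it produces a dense $G_\delta$ set $C\subseteq G^0$ on which the restriction of $f$ is continuous and which has the analogous genericity properties (for a comeager set of arrows $\gamma$ out of $x\in C$ one has $r(\gamma)\in C$, and a local ``$\forall^\ast$'' control of $f$ along arrows). Setting $X_0:=\{x\in G^0:\forall^\ast \gamma\in Gx,\ r(\gamma)\in C\}$, which is an invariant dense $G_\delta$ by the same category-quantifier arguments (\cite[Proposition 3.2.5 and Theorem 3.2.7]{gao_invariant_2009}, together with the open surjection $s:Gx\to G^0$ and \cite[Theorem A.1]{melleray_generic_2013}), one shows that $[x]\mapsto[f(x)]$ is a Hjorth-graph homomorphism on $X_0/G$. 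Given $x_0\sim_{\mathcal{H}}y_0$ in $X_0$, replace $x_0$ by $r(\gamma)x_0$ for a generic arrow to assume $x_0\in C\cap X_0$, and then simulate $\mathrm{Iso}(f(x_0),f(y_0))$: whenever Player~I in the target plays $\widehat U,\widehat V$ around the current point $f(x_n)$ (resp. $f(y_n)$), pull back through continuity of $f$ on $C$ and the orbit-continuity control to neighborhoods $U,V$ in $G$, feed these to the winning strategy of Player~II in $\mathrm{Iso}(x_0,y_0)$, use the comeager-refinement remark (the groupoid analogue of Remark~\ref{Remark:comeager}, stated in Subsection~\ref{Sbs:groupoid-turbulence}) to force the produced arrows' range-points into $C\cap X_0$ and to force $f$ to track them, and finally translate the resulting composite arrow $\gamma_n^x$ (a product of arrows from $V$) into a composite arrow $h_k\cdots h_0$ in $H$ from $\widehat V$ realizing the same motion of $f$-images. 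The main obstacle is precisely this bookkeeping in the groupoid setting: arrows do not form a group, so one must be careful that ``$V^k$'' type expressions compose correctly along the changing base-points $x_n,y_n$, that the orbit-continuity lemma is applied at each $x_n$ rather than at a fixed point, and that the pulled-back neighborhoods are genuinely neighborhoods of the correct moving object. Once these compatibility checks are in place, Theorem~\ref{Theorem:groupoid-turbulence} follows by combining (i)--(iii) exactly as Theorem~\ref{Theorem:turbulence} followed from Lemmas~\ref{Lemma:nonarchimedean} and \ref{Lemma:turbulent} and Proposition~\ref{Proposition:graph-homomorphism}.
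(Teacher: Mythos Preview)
Your proposal is correct and follows essentially the same route as the paper: the paper likewise deduces the theorem by observing that Lemma~\ref{Lemma:nonarchimedean} applies unchanged to the target, that the proof of Lemma~\ref{Lemma:turbulent} carries over to give that $\mathcal{H}(G)$ is a clique, and that the groupoid analogue of Proposition~\ref{Proposition:graph-homomorphism} follows from \cite[Lemma 4.5]{hartz_classification_2015} in place of Lemma~\ref{Lemma:orbit-continuity}. One small notational slip: when you write ``replace $x_0$ by $r(\gamma)x_0$'' you mean replace $x_0$ by $r(\gamma)$ for a generic $\gamma\in Gx_0$.
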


Theorem \ref{Theorem:groupoid-turbulence} recovers \cite[Theorem 4.3]%
{hartz_classification_2015}, and can be seen as the groupoid version of
Theorem \ref{Theorem:turbulence} for Polish groupoids.

Since the operations in the groupoid $G$ are continuous and open, one can
reformulate the Hjorth-isomorphism game $\mathrm{Iso}(x,y)$ as presented
above by letting Player II play open sets rather than groupoid elements. Fix
a countable basis $\mathcal{B}$ of Polish open subsets of $G$. In this
formulation of the game, Player I plays elements $U_{n}^{x},U_{n+1}^{y}$ of $%
\mathcal{B}$ for $n\geq 0$ and player II plays elements $W_{n}^{x},W_{n}^{y} 
$ of $\mathcal{B}$ for $n\geq 0$. The winning conditions are then, setting $%
U_{0}^{y}=G$,

\begin{itemize}
\item $r\left[ W_{n+1}^{y}\right] \subset U_{n}^{x}$ and $r\left[ W_{n}^{x}%
\right] \subset U_{n}^{y}$,

\item $W_{n}^{y}\subset \left( U_{n}^{y}\right) ^{k}$ for some $k\geq 1$ and 
$W_{n}^{x}\subset \left( U_{n}^{y}\right) ^{k}$ for some $k\geq 1$,

\item $y\in s\left[ W_{n}^{y}\cdots W_{0}^{y}\right] $ and $x\in s\left[
W_{n}^{x}\cdots W_{0}^{x}\right] $.
\end{itemize}

Such a version of the Hjorth-isomorphism game fits in the framework of Borel
games as described in \cite[Section 2.A]{kechris_classical_1995}. In fact,
this is an \emph{open }game for Player I and closed for Player II, which
allows one to define an $\omega _{1}$-valued \emph{rank }for strategies of
Player I \cite[Exercise 20.2]{kechris_classical_1995}. Insisting that Player
I only has winning strategies of rank at least $\alpha \in \omega _{1}$ (or
no winning strategy at all) gives a hierarchy of equivalence relations $\sim
_{\alpha }$ indexed by countable ordinals, whose intersection is the Hjorth
isomorphism relation.

\subsection{Becker-embeddings for Polish groupoids}

Similarly as for the Hjorth-isomorphism game, the Becker-embedding game $%
\mathrm{Emb}\left( x,y\right) $ can be defined whenever $x,y$ are objects in
a Polish groupoid $G$. This gives a notion of Becker embedding for objects $%
G $, by letting $x\preccurlyeq _{\mathcal{B}}y$ if and only if Player II has
a winning strategy for $\mathrm{Emb}\left( x,y\right) $. In turn this
induces a digraph structure $\mathcal{B}\left( G\right) $ on the space of $G$%
-orbits.

One can prove the groupoid analog of Proposition \ref%
{Proposition:digraph-homomorphism} in a similar fashion, by replacing Lemma %
\ref{Lemma:orbit-continuity} with \cite[Lemma 4.5]{hartz_classification_2015}%
. One can then deduce the following generalization of Theorem \ref%
{Theorem:criterion} to Polish groupoids.

\begin{theorem}
\label{Theorem:criterion-groupoid}Suppose that $G$ is a Polish groupoid. If
for any invariant dense $G_{\delta }$ subset $C$ of $G^{0}$ there exist $%
x,y\in C$ with different orbits such that $x\preccurlyeq _{\mathcal{B}}y$,
then the orbit equivalence relation $E_{G}$ is not CLI-classifiable.
\end{theorem}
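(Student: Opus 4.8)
The plan is to run the proof of Theorem~\ref{Theorem:criterion} essentially word for word, replacing its group-theoretic inputs by their groupoid counterparts. Two ingredients are needed. The first is that for a CLI group $H$ and a Polish $H$-space $Y$ the Becker digraph $\mathcal{B}\left( Y/H\right) $ contains only loops: this is exactly Lemma~\ref{lemma CLI totally Becker disco}, used with no modification, since CLI-classifiability of $E_{G}$ is defined in terms of reductions to orbit equivalence relations of \emph{group} actions. The second is the groupoid analog of Proposition~\ref{Proposition:digraph-homomorphism}: if $f:G^{0}\rightarrow Y$ is a Borel $\left( E_{G},E_{H}^{Y}\right) $-homomorphism, then there is an invariant dense $G_{\delta }$ subset $C$ of $G^{0}$ such that $\left[ f\right] :C/G\rightarrow Y/H$, $\left[ x\right] \mapsto \left[ f(x)\right] $, is a digraph homomorphism from the induced subgraph of $\mathcal{B}\left( G\right) $ containing only the vertices corresponding to orbits from $C$ to $\mathcal{B}\left( Y/H\right) $. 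As indicated in this subsection, this is proved by the same parallel-play argument as Proposition~\ref{Proposition:digraph-homomorphism}: one runs $\mathrm{Emb}\left( f(x_{0}),f(y_{0})\right) $ and $\mathrm{Emb}\left( x_{0},y_{0}\right) $ simultaneously, using \cite[Lemma 4.5]{hartz_classification_2015} in place of Lemma~\ref{Lemma:orbit-continuity} and the groupoid form of Lemma~\ref{Lemma:comeager-Becker} (itself obtained as before from the Kuratowski--Ulam theorem applied to the relevant continuous open maps, exactly as in Remark~\ref{Remark:comeager}).

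Granting these two ingredients, the deduction is short. Suppose, toward a contradiction, that $E_{G}$ is CLI-classifiable, witnessed by a CLI group $H$, a Polish $H$-space $Y$, and a Borel reduction $f:G^{0}\rightarrow Y$ of $E_{G}$ to $E_{H}^{Y}$. By the second ingredient there is an invariant dense $G_{\delta }$ subset $C$ of $G^{0}$ for which $\left[ f\right] $ is a digraph homomorphism from the induced subgraph of $\mathcal{B}\left( G\right) $ on $C$ to $\mathcal{B}\left( Y/H\right) $. By hypothesis there are $x,y\in C$ with different $G$-orbits and $x\preccurlyeq _{\mathcal{B}}y$; since $x,y\in C$ this is an arrow of the induced subgraph, whence $f(x)\preccurlyeq _{\mathcal{B}}f(y)$. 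By the first ingredient the Becker digraph $\mathcal{B}\left( Y/H\right) $ contains only loops, so $f(x)$ and $f(y)$ lie in the same $H$-orbit, i.e.\ $f(x)\,E_{H}^{Y}\,f(y)$. But $x$ and $y$ have different $G$-orbits, so $\neg\left( x\,E_{G}\,y\right) $, contradicting the assumption that $f$ is a reduction. Hence $E_{G}$ is not CLI-classifiable.

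Along the way one should record a couple of routine points: that the Becker-embedding game $\mathrm{Emb}\left( x,y\right) $ and the relation $x\preccurlyeq _{\mathcal{B}}y$ make sense verbatim for objects of a Polish groupoid (as noted in this subsection), reducing to Definition~\ref{Becker-game} in the case of an action groupoid $H\ltimes X$; and that, as in the proof of Proposition~\ref{Proposition:digraph-homomorphism}, one may replace $x$ by an object of its orbit lying in the good set $C$ before starting the simulation, using that $\left\{ \gamma \in Gx:r(\gamma )\in C\right\} $ is comeager for comeager-many $x\in G^{0}$, which is part of the groupoid orbit-continuity lemma \cite[Lemma 4.5]{hartz_classification_2015}. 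The only genuine work is in the second ingredient: transcribing the parallel-games construction of Proposition~\ref{Proposition:digraph-homomorphism} into the groupoid language, where every use of group multiplication and of neighborhoods of $1\in G$ must be replaced by composition of arrows and neighborhoods of a varying base object, and one must check that the groupoid orbit-continuity lemma supplies precisely the local data Player~II needs to translate legal moves of $\mathrm{Emb}\left( x_{0},y_{0}\right) $ into legal moves of $\mathrm{Emb}\left( f(x_{0}),f(y_{0})\right) $. Everything else is bookkeeping identical to the group case.
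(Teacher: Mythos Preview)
Your proposal is correct and matches the paper's approach exactly: the paper itself gives no detailed proof here but simply notes that one proves the groupoid analog of Proposition~\ref{Proposition:digraph-homomorphism} by replacing Lemma~\ref{Lemma:orbit-continuity} with \cite[Lemma 4.5]{hartz_classification_2015} and then deduces the result as in Theorem~\ref{Theorem:criterion}. Your write-up fills in precisely these steps, including the correct observation that Lemma~\ref{lemma CLI totally Becker disco} needs no groupoid generalization since the target of a CLI-classification is a group action.
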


As for the case of the Hjorth-isomorphism game, one can also describe the
Becker-embedding game $\mathrm{Emb}\left( x,y\right) $ for objects $x,y$ in
a Polish groupoid $G$ as an open game for Player I and closed for Player II.
This allows one to define an $\omega _{1}$-valued rank for strategies for
Player I. Again, insisting that Player I only has winning stategies of rank
at least $\alpha \in \omega _{1}$ gives a hierarchy or preorder relations $%
\preccurlyeq _{\alpha }$ indexed by countable ordinals, whose intersection
is the Becker-embeddability preorder.

\providecommand{\MR}[1]{}
\providecommand{\bysame}{\leavevmode\hbox to3em{\hrulefill}\thinspace}
\providecommand{\MR}{\relax\ifhmode\unskip\space\fi MR }
\providecommand{\MRhref}[2]{%
  \href{http://www.ams.org/mathscinet-getitem?mr=#1}{#2}
}
\providecommand{\href}[2]{#2}

\end{document}